\font\got=eufm10
\def\g2{ \hbox{\got g}_2}
\def\e7{ \hbox{\got e}_7}
\def\e8{ \hbox{\got e}_8}
\def\id{\mathop{\hbox{\rm id}}}
\def\L{{\mathcal L}}
\def\J{{\mathcal J}}
\def\a{\alpha}
\def\b{\beta}
\def\s{\sigma}
\def\tr{\mathop{\rm tr}}
\def\T{\mathcal{T}}
\def\A{\mathcal A}
\def\supp{\mathop{\hbox{\rm  Supp}}}
\def\M{\mathcal M}
\def\span#1{\langle#1\rangle}
\def\H{{\mathbb  H}}
\def\OO{\mathbb{O}}
\def\C{\mathbb C}
\def\Z{\mathbb Z}
\def\F{\mathbb F}
\def\R{\mathbb R}
\def\imag{\textbf{i}}
\newcommand{\ii}{\textbf{i}}
\def\aut{\mathop{\rm Aut}}
\def\Hom{\mathop{\rm Hom}}
\def\der{\mathop{\rm Der}}
\def\Ad{\mathop{\rm Ad}}
\def\ad{\mathop{\rm ad}}
\def\sll{\mathop{\rm sl}}
\def\fix{\mathop{\rm fix}}
\def\dim{\mathop{\rm dim}}
\def\sp31{\mathop{ \mathfrak{sp}}_{3,1}(\mathbb{H})}
\def\sll3{\mathop{ \mathfrak{sl}}(3,\mathbb{O})}
\def\Mat{\mathop{\rm Mat}}
\def\a{\alpha}
\def\si{\sigma}
\def\sign{\mathop{\hbox{\rm sign}}}
\def\tr{\mathop{\rm tr}}
\def\diag{\mathop{\rm diag}}
 \def\e6{  \mathfrak{e}_6 }
 \def\f4{  \mathfrak{f}_4 }
  \def\d4{  \mathfrak{d}_4 }
 \def\c4{  \mathfrak{c}_4 }
 \def\sig#1{ \mathfrak{e}_{6,-#1} }
\newtheorem{te}{Theorem}
\newtheorem{pr}{Proposition}
\newtheorem{lm}{Lemma}
\newtheorem{co}{Corollary}
\newtheorem{re}{Remark}
\title[Fine gradings on $\sig{14}$  ]{ Gradings on the real form $\sig{14}$}  
\author[C. Draper]{Cristina Draper${}^*$}
\subjclass[2010]{Primary  
17B70;   
Secondary 17B25,   	
17B60.  	
}
\keywords{
Gradings, exceptional Lie algebras, real forms, signature $-14$.
}
\thanks{${}^*$\ Partially supported by MCYT grant   MTM2016-76327-C3-1-P and by   the Junta de Andaluc\'{\i}a PAI
project  FQM-336.}
\address{Cristina Draper Fontanals: Departamento de
Matem\'atica Aplicada\\ Escuela de  Ingenier\'{\i}as Industriales\\ Ampliaci\'{o}n Campus de Teatinos, S/N, 29071 M\'alaga,
Spain\\cdf@uma.es}
\author[V. Guido]{Valerio Guido}
\address{Valerio Guido: valiero84@libero.it}
\begin{document}

\setlength{\unitlength}{0.06in}

\maketitle


\begin{abstract}
Six fine gradings on the real form $\sig{14}$ are described,   precisely those ones coming from fine gradings on the complexified algebra. The universal grading groups are $\Z_2^3\times\Z_3^2$, $\Z_2^6$, $\Z\times\Z_2^4$, $\Z_2^7$,
$\Z\times\Z_2^5$ and $\Z^2\times\Z_2^3$.
\end{abstract}


\section{Introduction}

This work continues a series of papers devoted to describing the fine gradings on the  exceptional real Lie algebras. 
The interest of gradings  {has}  
been present in the theory of Lie algebras from the very beginning.   Let us recall that, at the end of the nineteenth century, W.~Killing   used the root decomposition (which is, in fact, a grading over the free-torsion abelian group $\mathbb{Z}^l$ for $l$ the rank of the algebra) in order to classify complex finite-dimensional semisimple Lie algebras.  More recently, the  {physicist} 
J.~Patera and his collaborators proposed to  {start} a systematic study of the gradings on Lie algebras in \cite{LGI}. This seminal work was followed by some others for real classical Lie algebras, as \cite{LGIII} or \cite{Svobodova}. Since then, many authors  {shifted}  
the attention to the complex case.  A remarkable amount of these works was compiled in  \cite{libro}. This  monograph was published in 2013 and it contains almost completely the classification of the gradings on the complex simple finite-dimensional Lie algebras.  Precisely,  the classification of the gradings on type-$E$ algebras is only conjectured in the book,
and had to wait until  \cite{e6}, \cite{nuestroe8} (finite groups) and \cite{Yu} to be completed. A review of the fine gradings on  Lie algebras of type-$E$ is \cite{proceedingsLecce}, which tries to give a  version as unified as possible of the gradings on the three complex Lie algebras of type $E$. Having finished the complex case,   it is the moment to return to the real case, which is   relevant for many applications. The problem of getting a classification of the gradings on real simple Lie algebras is being tackled by several authors almost simultaneously. On one hand, some recent papers devoted to the classical simple real Lie algebras are \cite{reales1}, \cite{reales2} and \cite{reales3}. On the other hand, as mentioned, this is the third of our papers about exceptional real Lie algebras: first, in 
 \cite{reales}, we classified fine gradings on the real algebras of types $G_2$ and $F_4$, and second, in \cite{sig26}, we   described the fine  gradings on the   algebra $\mathfrak{e}_{6,-26}$ coming from fine gradings on the complex algebra $\mathfrak{e}_6$.
Let us recall that there are three non-split and non-compact real algebras of type $E_6$, characterized by the signatures of their Killing forms, namely, -26, -14 and 2. Here we focus    on  $\mathfrak{e}_{6,-14}$, an algebra   
 with frequent appearances  in Physics (see below).  {The difficulty in defining explicitly   $\mathfrak{e}_{6,-14}$ renders extremely useful the knowledge of its structure, gradings, and different models. } 
Such gradings and models could be potentially related with different physical phenomena.
The gradings on $\mathfrak{e}_{6,-14}$ have nothing to do with the gradings on $\mathfrak{e}_{6,-26}$ obtained in \cite{sig26}.
 Indeed, our main result, Theorem~\ref{th_main}, states that there are 6 fine gradings (and essentially only 6) on $\mathfrak{e}_{6,-14}$, whose  universal grading groups are
 $\Z_2^3\times\Z_3^2$, $\Z_2^6$, $\Z\times\Z_2^4$, $\Z_2^7$,
$\Z\times\Z_2^5$ and $\Z^2\times\Z_2^3$. In contrast, the fine gradings occurring on $\mathfrak{e}_{6,-26}$  are 4, with universal grading groups  $\Z_2^6$, $\Z\times\Z_2^4$, $\Z^2\times\Z_2^3$ and  $\Z_4\times\Z_2^4$. Only two pairs of those gradings have the same origin, i.e., isomorphic complexifications (the two $\Z^2\times\Z_2^3$-gradings have quite different properties).
 {Regarding}    
the tools used here for  finding the gradings on $\mathfrak{e}_{6,-14}$, they {are indeed}  
related  {to}   
the tools in \cite{sig26}, but new approaches have been necessary too.  {For each grading, we have tried to do a self-contained treatment, providing a related suitable model of the   algebra $\mathfrak{e}_{6,-14}$.} \smallskip

 The relation between graded Lie algebras and Physics is well-known, see, for instance, the classical paper \cite{fisicos} of 1975.
 Some recent reference is \cite{Atanasov},
    devoted  to  the  applications  of graded Lie algebras for  studying
dynamic symmetries of atomic nucliei.
A survey of algebraic methods  {that}  
are widely used in nuclear
and molecular physics is \cite{Iachello}, which states a spectrum generating algebra as the basic mathematical tool of all such methods, and presents some models where such spectrum generating algebra is just a graded Lie algebra. 
   More applications of the gradings on Lie algebras  to mathematical physics  and
particularly to particle physics are referenced in  our previous work \cite{sig26}. To illustrate them with some examples, recall first the Pauli matrices, which provide a fine $\mathbb{Z}_2^2$-grading on $\mathfrak{su}_2$.  In quantum mechanics, these matrices occur in the Pauli equation which takes into account the interaction of the spin of a particle with an external electromagnetic field. 
On the other hand, the Gell-Mann matrices  provide a  fine  $\mathbb{Z}_2^3$-grading on the compact Lie algebra $\mathfrak{su}_3$
(see Section~\ref{todoZ2}). The Gell-Mann matrices are used in the study of the strong interaction in particle physics, and they  serve to study the internal (color) rotations of the gluon fields associated with the colored quarks of quantum chromodynamics. Also
 the generalized Pauli matrices of order 3  provide a $\mathbb{Z}_3^2$-grading on $\mathfrak{su}_3^\mathbb C$, but note that it has been  necessary to complexify here, since the only groups being the grading groups of a compact Lie algebra are the direct product of copies of $\mathbb Z_2$. \smallskip

Moreover, the role of the Lie algebras of type $E_6$ in Physics is also quite relevant \cite{Wybourne}, giving rise to a large number of references, some of them appearing  
in \cite{sig26}. We can add, for instance, \cite{fis3},  on the discovery limits of different $E_6$ models at some colliders (as Tevatron); \cite{fis4}, on order-two twisted D-branes of
$E_6$; \cite{fis5}, which proposes a model  of dark energy and dark matter based on $E_6$  very different from the usual $E_6$-unification encountered in the literature; 
\cite{fis8}, which proposes $U_{28}\to  SU_{27}\times U_1\to E_6\to G_2\to SO_3$ for the extension of the interacting boson model;
\cite{fis7}, on $N=8$ black holes  
in five dimensions where
the common symmetry is $E_6$;
or \cite{fis6},  which analyses   heterotic string compactifications on specific classes of
manifolds with $SU(3)$-structure, proving that  $E_6$ is still the resulting gauge group in
four dimensions.   
\smallskip

Finally, we would also like  to mention   the growing number of specific appearances of the real form $\mathfrak{e}_{6,-14}$ in the literature.
This algebra appears as U-duality algebra when considering the theories coupled to gravity for $N=5$ supersymmetries and $D=3$ spacetime dimensions (\cite[\S7.3]{motivacion14}).
Also, according to \cite[Proposition~7.1]{fibradosHiggs},
 the Toledo invariant  of $\mathfrak{e}_{6,-14}$ corresponding to the Hitchin-Kostant-Rallis section for the moduli
space of its Higgs bundles is zero. 
 Relative to incidence geometry,
a recent paper concerned with the group $E_{6,-14}$    is \cite{realquadrangle}. It describes a projective embedding (called a Veronese embedding) of the building of the group  $E_{6,-14}$ (a generalized quadrangle)
over the reals, trying to follow the spirit of Freudenthal's description of the Cayley plane.
The paper \cite{Dobrev2}  reviews the progress of the   classification and
construction of invariant differential operators for non-compact semisimple Lie groups, including many references about the importance of these invariant differential operators in the description of physical symmetries. As the study   of such  classification should  proceed 
group by group, the author chooses some groups.  This choice, which includes $E_{6,-14}$, is supported by these groups being non-compact groups  that have a discrete series of representations (the  rank of the group $G$ coincides with the rank of the  maximal compact subgroup $K$), and, besides, $G/K$ belongs to the list of   Hermitian symmetric spaces (each one admits a complex structure which is invariant by the
group of isometries).  
The two exceptional cases in such list (the list of possible groups $G$ with $G/K$ irreducible non-compact Hermitian symmetric space)
are $E_{6,-14}$ and $E_{7,-25}$,
corresponding to   the only Lie  exceptional real algebras with highest weight representations. 
 The same author devotes the work  \cite{Dobrev} to study invariant differential operators focused on $E_{6,-14}$.
 Amazingly, the symmetric space $E_{7,-25}/E_{6,-14}$   appears in  \cite{agujerosnegros} corresponding to the orbit of a large black hole   with central charge equal to zero. 
It is just one of the  three species of the solutions to the black hole attractor equations giving rise to
different mass spectra of the scalar fluctuations. These authors have devoted a considerable amount of papers to study exceptional Lie algebras in relation to physical theories. In their own words (\cite{Marraniexcepcional}):
\lq\lq While describing the results of our recent work on exceptional Lie and Jordan algebras, so tightly intertwined in their connection with elementary particles, we will try to stimulate a critical discussion on the nature of spacetime and indicate how these algebraic structures can inspire a new way of going beyond the current knowledge of fundamental physics.\rq\rq
 \smallskip

\emph{Outline}. In Section~\ref{prelim}, we summarize the basics about gradings on a Lie algebra, while recalling the classification of the fine gradings on $\mathfrak{e}_{6}$ obtained in \cite{e6}. There are 14 fine gradings on the complex Lie algebra $\mathfrak{e}_{6}$, and some invariants are provided, including the universal groups. Also, the necessary  background   about real forms is recalled, and
some notations and results about signatures are stated. In 
Section~\ref{deAlbert}, two gradings on $\mathfrak{e}_{6,-14}$ over the groups $\Z_2^6$ and $\Z\times\Z_2^4$  are constructed based on a model of $\mathfrak{e}_{6,-14}$ as a sum of the set of traceless elements of certain Albert algebra with the Lie algebra of derivations of such Albert algebra (a non-split and non-compact real form of $\mathfrak{f}_4$).  Section~\ref{deTits} provides a  $\Z_2^3\times\Z_3^2$-grading  on $\mathfrak{e}_{6,-14}$ based on the Tits' construction. This requires to determine the signatures of the real forms obtained when applying Tits' construction to different Jordan and composition algebras. 
In Section~\ref{flag}, we exhibit a model of $\mathfrak{e}_{6,-14}$ based on a contact $\mathbb Z$-grading, in which two gradings over the groups $\Z\times\Z_2^5$ and $\Z^2\times\Z_2^3$ are considered. The last fine grading is studied in Section~\ref{todoZ2}. Similarly to the $\Z_2^2$-grading on $\mathfrak{su}_2$ and to the $\Z_2^3$-grading on $\mathfrak{su}_3$, related to the Pauli matrices and to the Gell-Mann matrices respectively, we find now a $\Z_2^7$-grading on $\mathfrak{e}_{6,-14}$.  This provides a basis of $\mathfrak{e}_{6,-14}$ with interesting properties (Corollary~\ref{co_basemona}). Most of the real simple (finite-dimensional) Lie algebras of rank $l$ admit a fine grading over the group $\Z_2^{l+1}$, with very few exceptions, as for instance $\mathfrak{e}_{6,-26}$ which is not $\Z_2^7$-graded \cite[Proposition~11]{sig26}. The last section is devoted to prove that there do not exist  fine gradings on $\mathfrak{e}_{6,-14}$ whose complexification is any of the remaining 8 fine gradings on the complex algebra $\mathfrak{e}_{6}$. This is a case-by-case proof for some of the gradings on $\mathfrak{e}_{6}$, based on the knowledge of their features and of \emph{compatible} subalgebras.   
Some representation theory of real Lie algebras is   used too. We finish with some open problems: apart from the mathematical questions  (not difficult to  {tackle}  although very technical),   we would feel deeply interested in finding the physical meaning of the obtained gradings, specially those ones involving groups with 3-torsion.


 \section{Preliminaries}\label{prelim}
 
We review now the basic concepts about gradings and real forms. 
Most of them appear in \cite{sig26}, but we sketch them here for self-containedness.
 
 \subsection{About gradings}
 The main reference about the topic of gradings on Lie algebras is the monograph \cite{libro}.

 Let $\A$ be a finite-dimensional algebra  over a field  $\F\in\{\R,\C\}$, and $G$ an abelian group.
  A \emph{$G$-grading} $\Gamma$ on $\A$ is a vector space decomposition
\begin{equation}\label{def_gr}
 \Gamma: \A = \bigoplus_{g\in G} \A_g
 \end{equation}
such that
$
 \A_g \A_h\subset \A_{g+h}$ for all $g,h\in G$.
 The subspace $\A_g$ is called \emph{homogeneous component of degree $g$} and its elements 
 \emph{homogeneous elements of degree $g$}. 
 The \emph{support} of the grading is the set $\supp \Gamma :=\{g\in G: \A_g\neq 0\}$.
 The \emph{type} of   $\Gamma$ is the sequence of integers $(h_1,\ldots, h_r)$, where $h_i$ is the number of homogeneous components of dimension $i$, with $i=1,\ldots, r$ and $h_r\neq 0$. Obviously, $\dim \mathcal{A} = \sum_{i=1}^r ih_i$.

  If $\Gamma\colon \A=\oplus_{g\in G} \A_g$ and $\Gamma'\colon \A=\oplus_{h\in H} \A'_{h}$ are gradings over two abelian groups $G$ and $H$, $\Gamma$ is said to be a \emph{refinement} of $\Gamma'$ (or $\Gamma'$ a \emph{coarsening} of $\Gamma$) if for any  $g\in G$, there is $h\in H$ such that $\A_g\subset \A'_{h}$.  A refinement is \emph{proper} if some inclusion $\A_g\subset \A'_{h}$ is proper. A grading is said to be \emph{fine} if it admits no proper refinement.
 Also, $\Gamma$ and $\Gamma'$ are said to be \emph{equivalent} if there is  a bijection $\alpha\colon  \supp \Gamma \rightarrow \supp \Gamma'$  and  $\varphi\in\aut(\A) $ such that $\varphi(\A_s)=\A'_{\alpha(s)}$ for all $s\in \supp \Gamma$. 
Note that in this case the grading groups $G$ and $H$ are not necessarily isomorphic groups. 
For any group grading $\Gamma$ on $\A$, there exists a distinguished group   among the ones which are  grading groups of gradings on $\A$ equivalent to $\Gamma$ such that they are generated by the support of the grading (more details in \cite{libro}). It is usually called the \emph{universal (grading) group of} $\Gamma$. 

If the ground field is the complex field, there is a close relationship between automorphisms and gradings.
First, it is obvious that  every diagonalizable automorphism $f\in\aut(\A)$ produces a grading on $\A$ over the subgroup of $\C^\times$ generated by the spectrum $\mathrm{Spec}(f)$, and also that   any collection of commuting diagonalizable automorphisms produces a grading where the homogeneous components are the simultaneous eigenspaces. 
Conversely, if $\Gamma$  is a $G$-grading as in \eqref{def_gr}  and we have any character $\chi\in\mathfrak{X}(G)=\mathrm{Hom}(G,\C^\times$), then the map 
$\varphi_\chi\colon\A\to\A $ is an automorphism of $\A$, being $\varphi_\chi(x):=\chi(g)x$ for any $g\in G$ and $x\in\A_g$.  
Moreover $\varphi_\chi$ belongs to the \emph{diagonal group} of $\Gamma$, defined as follows:
\[
\mathrm{Diag}(\Gamma):=\{ \varphi\in\aut(\A): \forall g\in G,\ \exists \alpha_g\in\C^\times\ \text{such that}\ \varphi\vert_{\A_g}=\alpha_g\id\}.
\]
Then $\mathfrak{X}(\mathrm{Diag}(\Gamma))$ is precisely the universal group of $\Gamma$. Furthermore, 
$\Gamma$ is a fine grading if and only if $\mathrm{Diag}(\Gamma)$ is a maximal abelian diagonalizable subgroup (usually called a \emph{MAD-group}) of $\aut(\A)$ (\cite[Theorem 2]{LGI}). For instance, if $S$ is a (complex) semisimple Lie algebra, and $\Gamma: S=\sum_{\alpha\in\Phi\cup\{0\}}S_\alpha$ is the root decomposition relative to a Cartan subalgebra, which is obviously a fine grading, then  $\mathrm{Diag}(\Gamma)=T$ is a maximal torus of the automorphism group, namely, if $\{\a_1,\dots,\a_l\}$ is a set of simple roots of $\Phi$, 
\begin{equation}\label{eq_toro}
{T}=\{t_{s_1, \dots, s_l}:s_i\in\C^\times\}\cong (\C^\times)^l
\end{equation}
where the automorphism $t_{s_1, \dots, s_l}\in\aut(S)$ acts scalarly on the root space $S_\a$  with eigenvalue $s_1^{k_1}\dots s_l^{k_l}$,  for $\alpha=\sum_{i=1}^lk_i\a_i$. Thus the universal group is $\mathfrak{X}(\mathrm{Diag}(\Gamma))\cong \Z^l$.\smallskip

 The fine gradings on the complex algebra $S=\mathfrak e_6$ were classified up to equivalence in \cite{e6}, that is, the MAD-groups of $\aut(\e6)$ were classified up to conjugation. According to this classification, there are 14 non-equivalent fine gradings   on $S$, whose universal grading groups and types are listed in 
  Table~\ref{tablafinasdee6}. Note that these two data together give the equivalence class of the grading. We have also added a column with the interval
 $  \dim_\mathbb{C}S_e\pm\dim_\mathbb{C}\sum_{\stackrel{2g=e}{g\ne e}}S_g$, which will be a tool to apply Proposition~\ref{pr_lemano} lately.

\begin{table}
\begin{center}{
\begin{tabular}{|c|c|c| c|}
\hline\vrule width 0pt height 10pt
 Grading & Universal group & Type & Interval  \cr
\hline\hline\vrule width 0pt height 12pt
 $\Gamma_{1}$ & $ \Z_3^4$   &  $( 72,0,2 )$  &  $0\pm0$ \cr
  \hline \vrule width 0pt height 13 pt
 $\Gamma_{2}$ & $\Z^2\times\Z_3^2$   &  $( 60,9 )$ &  $2\pm0$  \cr
\hline  \vrule width 0pt height 13 pt
$\Gamma_{3}$ & $\Z_3^2\times\Z_2^3$   &  $( 64,7 )$  &  $0\pm14$\cr
\hline \vrule width 0pt height 13 pt
$\Gamma_{4}$ & $\Z^2\times\Z_2^3$   &  $( 48,1,0,7 )$ &  $2\pm28$\cr
\hline \vrule width 0pt height 13 pt
$\Gamma_{5}$ & $\Z^6 $   &  $(72,0,0,0,0,1  )$  &  $6\pm0$\cr
\hline \vrule width 0pt height 13 pt
$\Gamma_{6}$ & $\Z^4\times\Z_2$   &  $(72,1,0,1  )$  &  $4\pm2$\cr
\hline \vrule width 0pt height 13 pt
$\Gamma_{7}$ & $ \Z_2^6$   &  $( 48,1,0,7 )$  &  $0\pm78$\cr   
\hline \vrule width 0pt height 13 pt
$\Gamma_{8}$ & $\Z\times\Z_2^4$   &  $(  57,0,7)$ &  $1\pm29$\cr
\hline \vrule width 0pt height 13 pt
$\Gamma_{9}$ & $\Z_3^3\times\Z_2$   &  $( 26,26 )$ &  $0\pm0$ \cr
\hline \vrule width 0pt height 13 pt
$\Gamma_{10}$ & $\Z^2\times\Z_2^3$   &  $(60,7,0,1   )$ &  $2\pm16$ \cr
\hline \vrule width 0pt height 13 pt
$\Gamma_{11}$ & $\Z_4\times\Z_2^4$   &  $(48,13,0,1     )$ &  $0\pm46$\cr
\hline \vrule width 0pt height 13 pt
$\Gamma_{12}$ & $\Z \times\Z_2^5$   &  $( 73,0,0,0,1  )$ &  $1\pm35$\cr
\hline \vrule width 0pt height 13 pt
$\Gamma_{13}$ & $ \Z_2^7$   &  $( 72,0,0,0,0,1  )$ &  $0\pm78$\cr
\hline \vrule width 0pt height 13 pt
$\Gamma_{14}$ & $ \Z_4^3$   &  $( 48,15 )$ &  $0\pm14$\cr
\hline %
 \end{tabular}\vspace{4pt} %
 \caption{Fine gradings on $\e6$}\label{tablafinasdee6}
 }\end{center}\smallskip
 \end{table} 
 
  \subsection{About real forms}\label{subse_realforms}
  A real Lie algebra $L$ is called \emph{a real form} of a complex Lie algebra $S$ if $L^\C=L\otimes_\R\C=L\oplus\ii L=S$.
  Recall that $L$ is semisimple if its Killing form $\kappa_L$ is non-degenerate. As, for any $x,y\in L$, 
  $\kappa_L(x,y)=\kappa_{L^\C}(x,y)$, consequently the semisimplicity of $L$ equivales to that one of $L^\C$.
  We will abuse of notation calling the \emph{signature} of $L$ (and denoting $\sign(L)$) to the signature of $\kappa_L$, that is, the difference between the number of positive and negative entries in the diagonal of any matrix of the symmetric bilinear form $\kappa_L$ relative to an orthogonal basis.
  For most of the  complex simple Lie algebras $S$, including $S=\e6$, two real forms of $S$ are isomorphic if and only if their signatures coincide, due to the correspondence \eqref{eq_correspondencia} below. 
  There always exists a \emph{compact} real form, with negative definite Killing form, and a \emph{split} real form, with signature equal to the rank of $S$, that is, the dimension of the Cartan subalgebra.
  In case $S=\mathfrak e_6$, there are five real forms up to isomorphism, characterized by their signatures, namely: $-78$, $-26$, $-14$, $2$ and $6$. We will use the notation $\mathfrak{e}_{6,s}$ to refer to any real form of $\e6$ of signature $s$, without specifying a concrete representative of the isomorphism class. So, $\mathfrak{e}_{6,-78}$ is compact and $\mathfrak{e}_{6,6}$ is split.\smallskip

 There is a strong relationship between real forms of a complex Lie algebra $S$
 and involutive automorphisms of $S$.
 If $L$ is  {a real form} of  $S=L\oplus\ii L$, then we can consider the map
 $$
 \sigma\colon S\to S, \quad  \sigma(x+\ii y)=x-\ii y, \quad (x,y\in L)
 $$
  which is a \emph{conjugation} (conjugate-linear order two map) such that $S^\sigma:=\{x\in S:\sigma(x)=x\}$ equals $L$.
   Moreover, two real forms $S^{\sigma_1}$ and $S^{\sigma_2}$ are isomorphic (real) Lie algebras if and only if there is $\varphi\in\aut(S)$ such that
  $\varphi\sigma_1\varphi^{-1}=\sigma_2$. 
  If $\mathfrak{C}_S$ and $\mathfrak{A}_S$ denotes the set of conjugations of $S$ and the set of involutive automorphisms of $S$, respectively, then 
\begin{equation}\label{eq_correspondencia}
  \mathfrak{C}_S/\cong\,\longrightarrow\mathfrak{A}_S/\cong,\qquad  [\si]\mapsto [\theta_\si],
\end{equation} 
 is well defined and bijective, where if $\si$ is any conjugation of $S$, we denote by  $\theta_\si\in\aut(S)$  any involutive automorphism commuting with $\si$ such that the conjugation $\theta_\si\si$ is compact. ($\tau$ is called a \emph{compact} conjugation if the real form $S^\tau$ is compact.) Furthermore, 
\begin{equation}\label{eq_signapartirparte fija}
\sign( {S^\si})=\dim S-2\dim\fix(\theta_\si),
\end{equation}
 where, for $\theta\in\aut(S)$, we denote  the fixed subalgebra by $\fix(\theta)=\{x\in S:\theta(x)=x\}$.
   In particular, Eq.~\eqref{eq_correspondencia} implies that there are 4 order two automorphisms of $\e6$ up to conjugation, characterized by the isomorphy class of their fixed subalgebra.
   This correspondence for  $S=\e6$ is detailed in Table~\ref{tabladeautomorfysignaturas}.

\begin{table}[h]
\begin{tabular}{|c||ccccc|}
\hline  
\vrule width 0pt height 12pt
    $\sign ({S^{\sigma}})$&$-78 $&$2 $&$-14 $&$-26 $&$6 $\\
   \vrule width 0pt height 12pt
   $\dim\fix(\theta_{\sigma})$&$78 $&$ 38$&$ 46$&$ 52$&$36 $\\
 \vrule width 0pt height 12pt
 $ \fix(\theta_{\sigma})$& $\quad E_6\quad  $ & $\quad A_5\oplus A_1\quad $ & $\quad D_5\oplus\C \quad $&$\quad F_4\quad  $&$\quad C_4\quad  $ \\
 \hline  
 \end{tabular}\vspace{3pt}
\caption{Automorphisms versus signatures}\label{tabladeautomorfysignaturas}
 \end{table}

 The order two automorphisms of the complex simple Lie algebras were classified by \'E.~Cartan in \cite{CartanAutomorfismos}.
 This gives the corresponding   real forms by \eqref{eq_correspondencia}. In case $S$ is a classical (complex) simple Lie algebra,
 the possibilities for a real form $L$  are:
   \begin{itemize}
  \item[$\boxed{A_n}$] If $S=\mathfrak{sl}_{n+1}(\C)$,  
  \begin{itemize}
  \item[$\circ$]   $\mathfrak{su}_{p,q}=\{x\in\mathfrak{sl}_{n+1}(\C):I_{p,q}x+\bar x^t I_{p,q}=0\}$, with $p+q=n+1$, of signature $-(n^2+2n)+4pq$;
  \item[$\circ$] $\mathfrak{sl}_{n+1}(\R)$, of signature $n$; 
  \item[$\circ$] $\mathfrak{sl}_m(\H)=\{x\in\mathfrak{gl}_{m}(\mathbb H):\textrm{Re}(\tr(x))\}=0$,  with odd $n=2m-1>1$, of signature $-2m-1=-n-2$.
  \end{itemize}
   \item[$\boxed{B_n, D_n}$] If $S\in\{\mathfrak{so}_{2n+1}(\C),\mathfrak{so}_{2n}(\C)\}$,
   \begin{itemize}
    \item[$\circ$]  $\mathfrak{so}_{p,q}(\R)=\{x\in\mathfrak{gl}_{p+q}(\mathbb R):I_{p,q}x+ x^t I_{p,q}=0\}$, with either  $p+q=2n+1$ (type $B_n$) or $p+q=2n$ (type $D_n$), of signature $\frac{(p+q)-(p-q)^2}2$;

  \item[$\circ$]  $\mathfrak{so}^*_{2n}(\R)=\{x\in\mathfrak{gl}_{n}(\mathbb H):x^th+ h\bar x=0\}$, where $h=\diag(\ii,\dots,\ii)$, of signature $-n$.
  This case only happens when $n>4$ ($D_n$).
    \end{itemize}
    \item[$\boxed{C_n}$] If $S=\mathfrak{sp}_{2n}(\C)$, 
    \begin{itemize} 
   \item[$\circ$]   $\mathfrak{sp}_{2n}(\R)=\{x\in\mathfrak{gl}_{2n}(\mathbb R): x+\bar x^t=0\}$, of signature $n$; 
    \item[$\circ$]    $\mathfrak{sp}_{p,q}(\H)=\{x\in\mathfrak{gl}_{n}(\mathbb H):I_{p,q}x+\bar x^t I_{p,q}=0\}$, with $p+q=n$, of signature $-2(p-q)^2-(p+q)$.
  \end{itemize}
    \end{itemize}
  Here $I_{p,q}=\diag(1,\dots,1,-1,\dots,-1)$ (the first $p$ entries in the diagonal are $1$  and the $q$ remaining entries are $-1$) and $I_n\equiv I_{n,0}$ is so the identity matrix. Also we denote by $\mathfrak{so}_{n}(\R)\equiv \mathfrak{so}_{n,0}(\R)$, and similar conventions are used for $\mathfrak{sp}$ and $\mathfrak{su}$, and for $\C$ and $\H$. Each real form will also be   denoted according to  its signature and the type of the  complexification, for instance, 
  $\mathfrak{sl}_m(\H)\equiv \mathfrak{a}_{2m-1,-2m-1}$. (In particular, $\mathfrak{a}_{2m-1,-2m-1}$ denotes also any algebra in the isomorphy class of $\mathfrak{sl}_m(\H)$.)
  Throughout  the text, $E_{ij}$ will denote a  matrix, of size depending on the context, where the only non-zero entry will be the $(i,j)$th, equal to $1$.
So   $\diag(s_1,\dots,s_n)$ means $\sum_{i=1}^ns_iE_{ii}$.    
\smallskip

 
 Note that the signatures of the subalgebras of a Lie algebra are related, in a certain way, to the signatures of the whole Lie algebra, although the Killing form of the subalgebra does not coincide with the restriction of the Killing form of the algebra. To this aim, we state the next result for complex Lie algebras.
  
      \begin{lm}\label{lema_positivo}
      Let $S_0$ be a simple subalgebra of a complex Lie algebra $S$. 
    Then  there is a positive rational number $ r\in\mathbb Q_{>0}$ such that $\kappa_S\vert_{S_0}=r\,\kappa_{S_0}$. 
            \end{lm}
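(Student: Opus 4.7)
The plan is to exploit the fact that a simple Lie algebra admits, up to scalar, a unique invariant symmetric bilinear form, and then to certify that the scalar is a positive rational by evaluating both Killing forms on a suitably chosen element.

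First I would observe that the restriction $\kappa_S|_{S_0\times S_0}$ is a symmetric bilinear form on $S_0$, and that it is invariant under the adjoint action of $S_0$: indeed $\kappa_S$ is invariant under $\ad(S)$, in particular under $\ad(S_0)$ since $S_0\subset S$. Because $S_0$ is simple, the space of $\ad(S_0)$-invariant symmetric bilinear forms on $S_0$ is one-dimensional (spanned by $\kappa_{S_0}$, which is nondegenerate). Hence there exists $r\in\C$ with $\kappa_S|_{S_0}=r\,\kappa_{S_0}$.

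Next I would show that $r$ is a positive rational number by evaluating on a coroot. Fix a Cartan subalgebra $\mathfrak{h}_0$ of $S_0$ and a simple root $\alpha$, and take $h=h_\alpha\in\mathfrak{h}_0$ the corresponding coroot. Then the eigenvalues of $\ad_{S_0}(h)$ on $S_0$ are integers (Cartan integers $\beta(h_\alpha)$ for $\beta$ running over the roots of $S_0$, together with zeros), so
\[
\kappa_{S_0}(h,h)=\tr\bigl(\ad_{S_0}(h)^2\bigr)=\sum_{\beta\in\Phi_0}\beta(h)^2\in\Z_{>0},
\]
the positivity coming from $\alpha(h)=2$. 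Viewing $S$ as a finite-dimensional $S_0$-module (complete reducibility by Weyl's theorem), the weights of $h$ on $S$ are again integers, so $\kappa_S(h,h)=\tr(\ad_S(h)^2)$ is a sum of squares of integers, hence a non-negative integer; and it is strictly positive because the adjoint submodule $S_0$ already contributes $\kappa_{S_0}(h,h)>0$ and the remaining $S_0$-isotypic components add non-negative contributions. Therefore
\[
r=\frac{\kappa_S(h,h)}{\kappa_{S_0}(h,h)}\in\Q_{>0}.
\]

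The main obstacle is conceptual rather than technical: one has to invoke the one-dimensionality of the space of invariant symmetric bilinear forms on a simple Lie algebra (a consequence of Schur's lemma applied to the adjoint representation, which is irreducible since $S_0$ is simple) and make sure that $\kappa_{S_0}$ spans that line, so that the restricted form must be a scalar multiple; both facts are standard. The rest is the elementary observation that coroots furnish an element on which both Killing forms take values in $\Z_{>0}$.
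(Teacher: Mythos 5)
Your proof is correct and follows essentially the same route as the paper's: Schur's lemma (one-dimensionality of the space of invariant forms on the simple algebra $S_0$) gives the scalar $r$, and evaluating both Killing forms on a coroot $h_\alpha$, where all weights of $S_0$ and of the $S_0$-module $S$ take integer values, shows $r$ is a ratio of positive integers. The only cosmetic difference is that you invoke Weyl's complete reducibility to isolate the adjoint summand, whereas the paper argues directly that $\Phi\subset\Lambda(S)$ so the root contributions already force $\kappa_S(h,h)>0$; both are standard and equivalent here.
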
   
   
   \begin{proof}
   If $\mathfrak{g}$ is a simple Lie algebra and $f_1,f_2\colon  \mathfrak{g}\times\mathfrak{g}\to\C$ are non-zero $\mathfrak{g}$-invariant maps, then $\tilde{f_i}\colon \mathfrak{g}\to\mathfrak{g}^*,x\mapsto f_i(x,-) $ are isomorphisms of $\mathfrak{g}$-modules and hence $\phi=\tilde{f_2}^{-1}\tilde{f_1}\in \Hom_\mathfrak{g}(\mathfrak{g},\mathfrak{g})$. Now $\phi\in \C\id_\mathfrak{g}$ (Schur's lemma) because any  eigenspace must be a non-zero $\mathfrak{g}$-submodule of the adjoint module $\mathfrak{g}$,   which is irreducible, so the eigenspace is the whole $\mathfrak{g}$.  If we apply this fact to $\mathfrak{g}=S_0$, $f_1=\kappa_S\vert_{S_0}$ and $f_2=\kappa_{S_0}$, we get that 
   there  exists $  r\in\C$ such that 
$\kappa_S\vert_{S_{ 0}}=r\,\kappa_{S_{ 0}}$.  

Let us denote by $\Phi$ a root system of $S_0$ relative to a Cartan subalgebra $\mathfrak{h}$ and $\{\alpha_i\}_{i=1}^l$ a set of simple roots of $\Phi$. Thus $\mathfrak{h}$ is spanned by $\{h_{\alpha_i}\}_{i=1}^l$, which satisfy $\lambda_i(h_{\alpha_j})=\delta_{ij}$ for $\lambda_i$ the fundamental dominant weights. As $h=h_{\alpha_1}\in S_0$, in particular $\kappa_S\vert_{S_{0}}(h,h)=r\,\kappa_{S_{0}}(h,h)$. The last Killing form is easy to compute since $\ad h$ acts scalarly on each root space $(S_0)_\a$ with scalar $\a(h)=\langle\a,\a_1\rangle\in\Z$, so that 
$\kappa_{S_{0}}(h,h)=\sum_{\alpha\in\Phi}\alpha^2(h)$  is a non-negative integer. In fact, it is positive, since $\alpha_1(h)=2$. 
Furthermore, for any representation $\rho\colon S_0\to\mathfrak{gl}(V)$,   
the endomorphisms $\rho(\mathfrak{h})$ are simultaneously diagonalizable, and the simultaneous eigenspaces are the weight spaces
\cite{Draper:Humphreysalg}. If we denote by $\Lambda(S )\subset\mathfrak{h}^*$ the weights of the $S_0$-module $S $, then $\Lambda(S)\subset\sum_{i=1}^l\Z \lambda_i$, so that 
$\kappa_{S}(h,h)= \sum_{\mu\in\Lambda(S )}\mu^2(h) \in\mathbb Z_{\ge0}$ and hence $r\in\mathbb Q_{\ge0}$.
Besides, as $\Phi\subset \Lambda(S)$, then $\kappa_{S}(h,h)=\kappa_{S_{0}}(h,h)+ \sum_{\mu\in\Lambda(S )\setminus\Phi}\mu^2(h)$ is strictly positive, and so $r$ is.
      \end{proof}

      This gives immediately the required facts for signatures of simple subalgebras.
      
      \begin{lm}\label{lema_kil}
      Let $L_0$ be a simple subalgebra of a real Lie algebra $L$. 
    Then  there is a positive rational number $ r\in\mathbb Q_{>0}$ such that $\kappa_L\vert_{L_0}=r\,\kappa_{L_0}$. 
    In particular, $\sign(\kappa_L\vert_{L_0})=\sign(L_0)$.
            \end{lm}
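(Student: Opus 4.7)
The plan is to reduce immediately to the complex setting and invoke the preceding Lemma~\ref{lema_positivo}. Set $S:=L\otimes_\R\C$ and $S_0:=L_0\otimes_\R\C$, so that $S_0$ is a complex subalgebra of $S$. The key observation is that the Killing form of a real Lie algebra equals the restriction to its real points of the complex Killing form of its complexification: for $x,y\in L$ one has $\kappa_S(x,y)=\kappa_L(x,y)$, and analogously $\kappa_{S_0}(x,y)=\kappa_{L_0}(x,y)$ for $x,y\in L_0$. This is simply because the Killing form is computed from traces of $\ad x\,\ad y$, and these traces are insensitive to the scalar extension when $x,y$ are taken in the real algebra.

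In the main case $L_0$ is absolutely simple, so that $S_0$ is a simple complex Lie algebra. Then Lemma~\ref{lema_positivo} applied to the inclusion $S_0\subset S$ produces a positive rational $r\in\mathbb Q_{>0}$ with $\kappa_S|_{S_0}=r\,\kappa_{S_0}$. Restricting this equality of complex bilinear forms to the real subspace $L_0$ and using the two coincidences above yields
\[
\kappa_L|_{L_0}(x,y)=\kappa_S|_{S_0}(x,y)=r\,\kappa_{S_0}(x,y)=r\,\kappa_{L_0}(x,y),\qquad x,y\in L_0,
\]
which is the desired identity. The statement about signatures is then immediate: multiplying a real symmetric bilinear form by a positive scalar preserves the signs of the diagonal entries in any orthogonal diagonalization, hence $\sign(\kappa_L|_{L_0})=\sign(r\,\kappa_{L_0})=\sign(\kappa_{L_0})=\sign(L_0)$.

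The only delicate point would be a real simple $L_0$ whose complexification $S_0$ is not simple, i.e., $L_0\cong\mathfrak g_\R$ for some complex simple $\mathfrak g$ and $S_0\cong\mathfrak g\oplus\mathfrak g$; this case does not arise for the simple real subalgebras that will be used later in the paper, so the previous paragraph suffices. (If needed, it can be handled by noting that the space of invariant symmetric bilinear forms on $S_0$ is two-dimensional and that the swap involution corresponding to the real form $L_0$ forces the two coefficients in the decomposition of $\kappa_S|_{S_0}$ to be equal, reducing the argument to a single positive rational once again.)
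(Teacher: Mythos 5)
Your proposal is correct and follows essentially the same route as the paper: reduce to Lemma~\ref{lema_positivo} via the identities $\kappa_L\vert_{L_0}=\kappa_{L^\C}\vert_{L_0}$ and $\kappa_{L_0}=\kappa_{L_0^\C}\vert_{L_0}$, then use positivity of $r$ for the signature claim. Your parenthetical remark about the case where $L_0$ is simple but not absolutely simple (so that $L_0^\C$ is not simple and Lemma~\ref{lema_positivo} does not literally apply) addresses a subtlety that the paper's one-line proof leaves implicit, and your sketch of how to handle it is sound.
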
   
            
            \begin{proof} 
            The first result is clear from Lemma~\ref{lema_positivo}, since $\kappa_L\vert_{L_0}=\kappa_{L^\C}\vert_{L_0}$ and also $\kappa_{L_0}=\kappa_{L_0^\C}\vert_{L_0}$. For the claim relative to the signatures, observe that the key point is that the scalar $r$ is positive.
       \end{proof}

\subsection{About gradings on real forms}
 
If $L$ is a real form of a complex Lie algebra $S$, there is a close relationship between   gradings on $L$ and on $S$.

  If $\Gamma: L = \oplus_{g\in G} L_g$ is a grading on a real Lie algebra $L$, let us denote by $\Gamma^\C$ the grading on $L^\C$ given by   $\Gamma^{\C}: L^{\C}=  \oplus_{g\in G} (L_g)^\C=\oplus_{g\in G} (L_g \oplus \ii L_g)$. 
  It will be called the  \emph{complexified grading}.
  It is important to observe that, if $\Gamma^{\C}$ is a fine grading on $L^\C$, then $\Gamma$ is a fine grading on $L$.

Let $\Gamma_S: S = \oplus_{g\in G} S_g$ be a grading on a complex Lie algebra $S$ and let $L$ be a real form of $S=L^\C$. We will say that $L$ \emph{inherits} the grading $\Gamma_S$ if there exists a grading  $\Gamma$ on $L$ such that $\Gamma^\C=\Gamma_S$, that is, $L=\oplus_{g\in G} (L\cap S_g)$.

By abuse of notation, we will say that $\sig{14}$ inherits the grading $\Gamma_i$ in Table~\ref{tablafinasdee6} ($i=1,\dots,14$) if some real form of $\e6$ of signature $-14$ has a grading whose complexified grading is  {equivalent} to $\Gamma_i$.\smallskip

A useful criterion to check if a determined real form of a complex Lie algebra inherits a grading, assumed we know  another real form   inheriting it,   can be stated as a direct consequence of \cite[Proposition~3]{reales}:
  
 \begin{pr}\label{pr_nuestrometodo} 
 Let $\si$ be a conjugation of $S$ such that $S^\si$ inherits certain fine grading $\Gamma: S = \oplus_{g\in G} S_g$.
 For any other conjugation $\mu$ of $S$, the real form $S^\mu$ inherits $\Gamma$ if and only if $\mu\si^{-1}\in \mathrm{Diag}(\Gamma)\le\aut(S)$. 
 \end{pr}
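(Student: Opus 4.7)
The plan is to unpack what it means for a real form $S^\mu$ to inherit $\Gamma$ and reduce this condition to a property of the composition $\mu\sigma^{-1}$.

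The first step is the key criterion: for a conjugation $\tau$ of $S$, the real form $S^\tau$ inherits $\Gamma$ if and only if $\tau(S_g)=S_g$ for every $g\in G$. If $\tau$ preserves each $S_g$, then $\tau|_{S_g}$ is a conjugate-linear involution of $S_g$, so $S_g=(S^\tau\cap S_g)\oplus\ii(S^\tau\cap S_g)$; summing over $g$ yields $S^\tau=\bigoplus_g(S^\tau\cap S_g)$. Conversely, if $S^\tau$ decomposes in this way, writing any $x\in S^\tau$ through the decomposition $S=\bigoplus_g S_g$ gives unique components $x_g\in S_g$, and $\tau(x)=\sum_g\tau(x_g)$ with $\tau(x_g)\in S_g$ forces $\tau(x_g)=x_g$ by uniqueness, so $\tau$ stabilizes each $S_g$.

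Next I would observe that $\varphi:=\mu\sigma^{-1}=\mu\sigma$ is a complex-linear automorphism of $S$, since the composition of two conjugate-linear automorphisms is $\C$-linear and the bracket is preserved. Writing $\mu=\varphi\sigma$ and using the hypothesis that $\sigma$ stabilizes each $S_g$, it is immediate that $\mu(S_g)=S_g$ for every $g$ if and only if $\varphi(S_g)=S_g$ for every $g$. Combined with the criterion just established, $S^\mu$ inherits $\Gamma$ if and only if $\varphi\in\aut(S)$ stabilizes every homogeneous component of $\Gamma$.

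The last step, which I expect to be the delicate one, is to identify the stabilizer $\{\varphi\in\aut(S):\varphi(S_g)=S_g\ \forall g\in G\}$ with $\mathrm{Diag}(\Gamma)$. One inclusion is immediate from the definition. For the reverse, the fineness of $\Gamma$ is essential: by the characterization recalled before the statement, $\mathrm{Diag}(\Gamma)$ is a MAD-group of $\aut(S)$; any $\varphi$ preserving each $S_g$ commutes with $\mathrm{Diag}(\Gamma)$, and the abelian subgroup generated by $\varphi$ together with $\mathrm{Diag}(\Gamma)$ remains diagonalizable, so by maximality it must coincide with $\mathrm{Diag}(\Gamma)$, forcing $\varphi$ to act as a scalar on each $S_g$. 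This is essentially the content of \cite[Proposition~3]{reales}, which I would simply invoke to conclude, rather than reproduce the MAD argument in detail.
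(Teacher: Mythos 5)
Your overall reduction is sound and is, in substance, what the paper does: the paper offers no proof at all, simply declaring the statement a direct consequence of \cite[Proposition~3]{reales}, so your unpacking of ``$S^\tau$ inherits $\Gamma$'' as ``$\tau(S_g)=S_g$ for all $g$'', and the passage to the linear automorphism $\varphi=\mu\sigma$, are welcome details that the paper leaves implicit. One small remark: your converse in the first step is circular as written (you assert $\tau(x_g)\in S_g$, which is what is to be proved); the clean argument is that $S^\tau=\oplus_g(S^\tau\cap S_g)$ forces $S_g=(S^\tau\cap S_g)\oplus\ii(S^\tau\cap S_g)$ by comparing the two decompositions of $S=S^\tau\oplus\ii S^\tau$, and the right-hand side is visibly $\tau$-invariant.

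The one genuine gap is in the final step. From ``$\varphi$ preserves each $S_g$'' you conclude that the subgroup generated by $\varphi$ and $\mathrm{Diag}(\Gamma)$ ``remains diagonalizable'', but that requires $\varphi$ itself to be semisimple, which is not automatic: an automorphism can stabilize every homogeneous component of a fine grading without acting scalarly on the components of dimension greater than one (and $\Gamma_7$, $\Gamma_{12}$, $\Gamma_{13}$ all have such components), so a priori $\varphi$ could carry a nontrivial unipotent part, and a group containing a nontrivial unipotent is not diagonalizable no matter how abelian it is. The standard way to close this --- and essentially the content of the cited result --- is to note that the stabilizer of all the $S_g$ is exactly the centralizer $C$ of the MAD-group $Q=\mathrm{Diag}(\Gamma)$ in $\aut(S)$; maximality of $Q$ forces the identity component $C^\circ$ to coincide with the torus $Q^\circ$, so every element of $C$ has finite order modulo a torus and is therefore semisimple in characteristic zero; only then does your maximality argument apply to give $C=Q$. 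Since you explicitly defer to \cite[Proposition~3]{reales} rather than reproduce the argument, this does not invalidate the proposal, but the sketch presents as routine precisely the point where fineness does its work.
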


Gradings and Killing forms are closely related. For instance, if $L = \oplus_{g\in G} L_g$ is a grading over an abelian group $G$, then two homogeneous components $L_g$ and $L_h$ are orthogonal for the Killing form unless $g+h$ is the neutral element $e\in G$. In particular, the homogeneous components corresponding to degrees of order not 2 are totally isotropic subspaces.\smallskip

               \begin{lm}\label{lema_parejas}
  For $L=L_{\bar0}\oplus L_{\bar1}$   a real $\Z_2$-graded algebra   and $0\ne t\in\R$, denote by $L^t:=(L,[\ ,\ ]^t)$ the Lie algebra with the same underlying vector space but new product given by
     \begin{equation}\label{eq_Lapareada}
     [x_0+v_0,x_1+v_1]^t=[x_0,x_1]+[x_0,v_1]+[v_0,x_1]+t[v_0,v_1],
     \end{equation}
   if $x_i\in L_{\bar0}$ and $v_i\in L_{\bar1}$. Then, for $\varepsilon(t)=t/|t|\in\{\pm1\}$: 
   \begin{itemize}
   \item[a)] $\sign(L^t) = \sign(\kappa_L\vert_{L_{\bar0}})+\varepsilon(t)\sign(\kappa_L\vert_{L_{\bar1}})$,
   \item[b)] $\sign(L)+\sign(L^{-1})=2\sign(\kappa_L\vert_{L_{\bar0}})$,
   \item[c)] $\sign(L)+\sign(L^{-1})= 2\sign(L_{\bar0})$ in case  $L_{\bar0}$ is simple.
   \end{itemize}
   \end{lm}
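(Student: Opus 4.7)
The plan is to compute the Killing form of $L^t$ directly in terms of that of $L$, and then read off all three signature identities.

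First I would exploit the $\Z_2$-grading to decompose $\kappa_L = \kappa_L\vert_{L_{\bar0}}\perp\kappa_L\vert_{L_{\bar1}}$, i.e., cross terms between $L_{\bar0}$ and $L_{\bar1}$ vanish, because for $x\in L_{\bar0}$ and $v\in L_{\bar1}$ the operator $\ad_x\ad_v$ swaps parity and so has trace zero. The identical parity argument applied to the bracket $[\ ,\ ]^t$ gives $\kappa_{L^t}(L_{\bar0},L_{\bar1})=0$, so it only remains to compare $\kappa_{L^t}$ and $\kappa_L$ on $L_{\bar0}\times L_{\bar0}$ and on $L_{\bar1}\times L_{\bar1}$.

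The key step is to describe $\ad^t$ in terms of $\ad$. Directly from \eqref{eq_Lapareada}: if $x\in L_{\bar0}$, then $\ad^t_x=\ad_x$ as an endomorphism of $L$, because the only bracket rescaled in \eqref{eq_Lapareada} is the odd-odd one. If $v\in L_{\bar1}$, then $\ad^t_v$ coincides with $\ad_v$ on $L_{\bar0}$ (odd-even bracket), while $\ad^t_v\vert_{L_{\bar1}}=t\,\ad_v\vert_{L_{\bar1}}$. Composing, for $u,v\in L_{\bar1}$ the endomorphism $\ad^t_u\ad^t_v$ preserves parity and, on each homogeneous component, a routine check shows it equals $t\,\ad_u\ad_v$. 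Taking traces,
\[
\kappa_{L^t}\vert_{L_{\bar0}\times L_{\bar0}}=\kappa_L\vert_{L_{\bar0}\times L_{\bar0}},\qquad \kappa_{L^t}\vert_{L_{\bar1}\times L_{\bar1}}=t\,\kappa_L\vert_{L_{\bar1}\times L_{\bar1}}.
\]

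Part (a) is then immediate: the signature of $t\,\kappa_L\vert_{L_{\bar1}}$ equals $\varepsilon(t)\sign(\kappa_L\vert_{L_{\bar1}})$ because multiplication by a nonzero real scalar of sign $\varepsilon(t)$ keeps or flips the sign of each eigenvalue of a Gram matrix. Part (b) follows by adding the expressions of (a) for $t=1$ and $t=-1$. Finally, part (c) invokes Lemma~\ref{lema_kil}: since $L_{\bar0}$ is simple, $\kappa_L\vert_{L_{\bar0}}=r\,\kappa_{L_{\bar0}}$ with $r>0$, so $\sign(\kappa_L\vert_{L_{\bar0}})=\sign(L_{\bar0})$, and substituting into (b) gives the conclusion. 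The only slightly delicate step is the trace computation for odd-odd inputs, but it reduces to the parity-preserving block computation above; the rest of the argument is purely book-keeping.
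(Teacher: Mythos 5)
Your proposal is correct and follows essentially the same route as the paper: both arguments reduce to the three identities $\kappa_{L^t}\vert_{L_{\bar0}\times L_{\bar0}}=\kappa_L\vert_{L_{\bar0}\times L_{\bar0}}$, $\kappa_{L^t}(L_{\bar0},L_{\bar1})=0$, and $\kappa_{L^t}\vert_{L_{\bar1}\times L_{\bar1}}=t\,\kappa_L\vert_{L_{\bar1}\times L_{\bar1}}$, then obtain (b) by adding the two instances of (a) and (c) from Lemma~\ref{lema_kil}. The only difference is that you spell out the block computation of $\ad^t_u\ad^t_v$ that the paper leaves implicit.
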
   
   
   \begin{proof} For item a), note that $\kappa_{L^t}(x_0,x_1)=\kappa_{L}(x_0,x_1)$; $\kappa_{L^t}(x_0,v_1)=0=\kappa_{L}(x_0,v_1)$
   and $\kappa_{L^t}(v_0,v_1)=t\,\kappa_{L}(v_0,v_1)$. 
   Item b) is obtained when adding the  two equations obtained in item a) for $t=1$ and  $t=-1$. 
   Item c) is consequence of item b) and  Lemma~\ref{lema_kil}:
   $\sign(\kappa_L\vert_{L_{\bar0}})=\sign(L_{\bar0})$. 
   \end{proof} 
   
              This result relates the signature of a $\Z_2$-graded Lie algebra $L=L_{\bar0}\oplus L_{\bar1}$ with that one of $ L_{\bar0}\oplus\ii L_{\bar1}=L^{-1}$, which is a different real form of $L^\C$. This is important because any grading on $L$ which is a  refinement of such $\Z_2$-grading is immediately a grading of the other real form $L^{-1}$ (and conversely).\smallskip

       Finally, we put our attention on the next result,  extracted from \cite[Proposition~9]{sig26}, which bounds the admissible signatures of a real form inheriting a determined grading.
  
  \begin{pr}\label{pr_lemano}
Let $S$ be a complex simple Lie algebra and $L$ a real form of $S$. Suppose that $L$ inherits    a fine grading on $S$ given by $ S=\sum_{g\in G}S_g$. Then
$$
|\sign (L) - \dim  S_e | \leq \sum_{\stackrel{e\ne g\in G}{2g=e}}\dim  S_g.
$$
\end{pr}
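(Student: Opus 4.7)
The plan is to decompose $\kappa_L$ orthogonally along the inherited grading, observe that only the components of $2$-torsion contribute to the signature, and show that $\kappa_L|_{L_e}$ is positive definite.

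Since $L$ inherits $\Gamma$, setting $L_g := L\cap S_g$ gives $L=\bigoplus_{g\in G}L_g$ with $\dim_\R L_g = \dim S_g$. As $\ad(L_g)\ad(L_h)$ shifts the grading by $g+h$, we have $\kappa_L(L_g,L_h)=0$ whenever $g+h\ne e$. Writing $T := \{g\in G\setminus\{e\} : 2g=e\}$ and pairing $\{g,-g\}$ for the remaining $g$, this yields the orthogonal decomposition
\[
\kappa_L \;=\; \kappa_L|_{L_e}\;\perp\;\bigoplus_{g\in T}\kappa_L|_{L_g}\;\perp\;\bigoplus_{\{g,-g\}}\kappa_L|_{L_g\oplus L_{-g}}.
\]
Each summand is non-degenerate (as $\kappa_L$ is), and each pairing $\kappa_L|_{L_g\oplus L_{-g}}$ with $g\ne -g$ is hyperbolic of signature $0$. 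Consequently
\[
\sign(L) \;=\; \sign(\kappa_L|_{L_e}) + \sum_{g\in T}\sign(\kappa_L|_{L_g}),
\]
with $|\sign(\kappa_L|_{L_g})|\le\dim_\R L_g=\dim S_g$ for every $g\in T$.

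The crucial step is to prove $\sign(\kappa_L|_{L_e}) = \dim S_e$, i.e.\ that $\kappa_L|_{L_e}$ is positive definite. Since $\Gamma$ is fine, $\mathrm{Diag}(\Gamma)\le\aut(S)$ is a MAD-group, and the Lie algebra $\mathfrak{q}$ of its identity component (a complex torus of dimension equal to the free rank of the universal group $G$) is contained in $S_e$. For each of the fine gradings on $\e6$ listed in Table~\ref{tablafinasdee6}, $\dim S_e$ coincides with this free rank, so $\mathfrak{q}=S_e$. Hence every $\xi\in L_e = S_e\cap L$ acts on each $L_g$ by a scalar $\chi_g(\xi)$, necessarily real because $\ad\xi$ preserves the real form $L_g\subset S_g$. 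Therefore
\[
\kappa_L(\xi,\xi) \;=\; \sum_{g\in G}\chi_g(\xi)^2\,\dim_\R L_g \;\ge\; 0,
\]
with equality only if $\ad\xi = 0$, i.e.\ $\xi=0$ by simplicity of $L$.

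Combining the three ingredients yields $|\sign(L)-\dim S_e|\le\sum_{g\in T}\dim S_g$, as required. The main subtlety is the positive-definiteness of $\kappa_L|_{L_e}$, which rests on the identification of $S_e$ with the toral Lie algebra of the connected component of the MAD-group; for the gradings under consideration this can be read off case by case from Table~\ref{tablafinasdee6}.
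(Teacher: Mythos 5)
Your proof is correct and follows essentially the same route as the paper's (the statement is imported from \cite[Proposition~9]{sig26}, and the two key ingredients the paper later attributes to that proof --- the orthogonal splitting of $\kappa_L$ along the grading, with the non-$2$-torsion pairs $L_g\oplus L_{-g}$ contributing hyperbolic planes, and the positive definiteness of $\kappa_L\vert_{L_e}$ coming from elements of $L_e$ acting by real scalars on each component --- are exactly the ones you establish). The only caveat is that your identification of $S_e$ with the Lie algebra of $\mathrm{Diag}(\Gamma)^\circ$ is verified case by case from Table~\ref{tablafinasdee6}, so as written the argument covers $\e6$ rather than an arbitrary simple $S$; this is harmless for the paper's applications, and in fact that identification holds for any fine grading on a complex simple Lie algebra, so the general statement follows by the same reasoning.
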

This is the reason why we have added,  in the last column in  Table~\ref{tablafinasdee6},  the possible interval for the signature  $  \dim_\mathbb{C}S_e\pm\dim_\mathbb{C}\sum_{\stackrel{2g=e}{g\ne e}}S_g$    for each grading $\Gamma_i$.

\subsection{About related structures}

Throughout this paper we will use   several non-associative algebras.  

First, we denote by $\OO$ the \emph{octonion algebra}, which is the real division algebra endowed with a norm $n$ such that $\{1, e_i:1\le i\le 7\}$ is an orthogonal basis and 
the product is given by recalling Fano plane in Figure~\ref{fig_fano}:   each triplet $ \{e_{ i},e_{j },e_{k }\}$ in one of the seven \emph{lines} (the circle is also a line) spans a subalgebra isomorphic to $\R^3$ with the usual cross product. (Thus  $ \R\langle1,e_{ i},e_{j },e_{k }\rangle$ is isomorphic to the   quaternion algebra $\mathbb H$ when $e_{ i}$, $e_{j }$ and $e_{k }
$ are the three different elements in one line. That is, $e_ie_je_k=-1$ in the sense of the arrows.) Recall that 
$\overline{a}:=r1-\sum s_ie_i$ if $a=r1+\sum s_ie_i$, so that the norm and the trace are given by $n(a)=a\overline{a}$ and $t_\OO(a)=a+\overline{a}$. The subspace of zero trace octonions (or \emph{imaginary} octonions) is denoted by $\OO_0$.

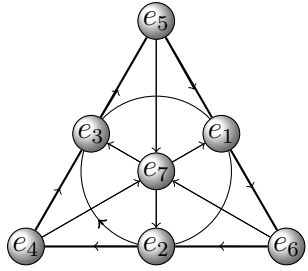
\begin{figure}
 \begin{center}  
    \begin{tikzpicture}
\tikzstyle{point}=[ball color=white, circle, draw=black, inner sep=0.02cm]
\node (v7) at (0,0) [point] {$e_7$};
\draw[->] (0,0) circle (1cm);
 \draw[thick,-<] ({-1/sqrt(2)},{-1/sqrt(2)}) -- ({-1/sqrt(2)+0.001},{-1/sqrt(2)-0.001});
\node (v1) at (90:2cm) [point] {$e_5$};
\node (v2) at (210:2cm) [point] {$e_4$}; 
\node (v4) at (330:2cm) [point] {$e_6$};
\node (v3) at (150:1cm) [point] {$e_3$};
\node (v6) at (270:1cm) [point] {$e_2$};
\node (v5) at (30:1cm) [point] {$e_1$};  
\draw [->, color=black] (v4) -- (v7);
\draw [->, color=black] (v1) -- (v7);
\draw [->, color=black] (v2) -- (v7);
\draw [->, color=black] (v7) -- (v3);
\draw [->, color=black] (v7) -- (v6);
\draw [->, color=black] (v7) -- (v5);
\draw [->, color=black] (v6) -- (230:13mm);\draw [->, color=black] (v4) -- (310:13mm); 
\draw [->, color=black] (v1) -- (65:12mm);\draw [->, color=black]  (v3) --  (115:12mm) ; 
\draw [->, color=black] (v5) -- (350:13mm);\draw [->, color=black] (v2) -- (190:13mm);
\draw[-,thick] (v1) -- (v3) -- (v2);
\draw[-,thick] (v2) -- (v6) -- (v4);
\draw[-,thick] (v4) -- (v5) -- (v1);
\draw (v3) -- (v7) -- (v4);
\draw (v5) -- (v7) -- (v2);
\draw (v6) -- (v7) -- (v1);
\end{tikzpicture}
\caption{Fano plane}\label{fig_fano}
\end{center}
\end{figure}

Second,   an $\mathbb F$-algebra is called a \emph{Jordan algebra} if it is   commutative  and it satisfies the Jordan identity
$
(x^2\cdot y)\cdot x=x^2\cdot (y\cdot x)
$.
The examples of real Jordan algebras more relevant for our purposes are
$$
 \J_c=\mathcal{H}_3(\OO,\gamma_1),\quad
 \J=\mathcal{H}_3(\OO,\gamma_2), \quad  
\M=\mathcal{H}_3(\C,\gamma_3),\quad
\M_s=\mathcal{H}_3(\R\oplus\R,\gamma_1);
 $$
where if $\mathcal C$ is either  $\OO$, or $\C$, or $\R\oplus\R$ (with the exchange involution $\overline{(a,b)}=(b,a)$), 
\begin{equation*}\label{eq_jordanalg}   
\mathcal{H}_3(\mathcal C,\gamma):=\{x \in \textrm{Mat}_{3\times 3}(\mathcal C): \gamma\bar x^t\gamma^{-1}=  x \},
\end{equation*}
the so called \emph{symmetrized} product is given by
\begin{equation}\label{eq_sympr}
x\cdot y:=\frac12(xy+yx),
\end{equation}
and $\gamma$ is either  $\gamma_1=I_3$ or $\gamma_2=\diag(1,-1,1)$ or $\gamma_3=E_{11}+E_{23}+E_{32}$. (Note that $\M=\mathcal{H}_3(\C,\gamma_3)$ is of course isomorphic to the Jordan algebra $ \mathcal{H}_3(\C,\gamma_2)$, but our preference is due to the fact the gradings in Eq.~\eqref{eq_Z32} will be more easily formulated in terms of $\M$.)
 
 If $A$ is an associative algebra, $A^+$ denotes the Jordan algebra obtained when considering the symmetrized product \eqref{eq_sympr} on the vector space $A$. Observe that $\M_s$ is isomorphic to $\textrm{Mat}_{3\times 3}(\R)^+$.   \smallskip

 \section{From the Albert algebra to gradings over $\Z_2^6$ and  $\Z\times\Z_2^4$}\label{deAlbert}
 \subsection{The model}
  
Nathan Jacobson described all the real forms of $\e6$ in his book about exceptional Lie algebras  \cite[Eq.~(147)]{Jacobsondeexcepcionales}. The five of them can be
 obtained as  $L =(\der (J)\oplus   J_0)^\pm$ for $J$ a real form of the Albert algebra $\J_c^\C$, which is the only exceptional complex Jordan algebra, of dimension 27. The product on $L $ is given  by
 \begin{itemize}
 \item The natural action of $\der (J)$ on $J_0$;
 \item If $x,y\in J_0$, $[x,y]:=\pm [R_x,R_y]\in\der (J)$,
 where
 $R_x\colon J\to J$ denotes the operator multiplication $R_x(y)=x\cdot y$.
  \end{itemize}
  Thus $L $ becomes a $\Z_2$-graded Lie algebra in both cases $\pm$, with even part $L_{\bar0}=\der (J)$ and odd part $L_{\bar1}=J_0$.  
  A first model of $\sig{14}$
is 
$$
\L=(\der(\J)\oplus \J_0)^-, \text{ for } \J=\mathcal{H}_3(\OO,\diag\{1,-1,1\}).
$$ 
 Indeed,  
  the even and the odd part of any $\Z_2$-graded Lie algebra are orthogonal for the Killing form $\kappa$ of $\L$, so   
    $\sign (\L)=\sign \kappa\vert_{L_{\bar0}}+ \sign \kappa\vert_{L_{\bar1}}$.
On one hand,  $\der(\J)\cong\mathfrak{f}_{4,-20}$, and, by Lemma~\ref{lema_kil}, 
the identity $\sign(\kappa\vert_{L_{\bar0}})=\sign({L_{\bar0}})=-20$ holds.
 On the other hand, the traceform of the   Jordan algebra $\J$ ($(x,y)\mapsto\tr(x\cdot y)$)   has signature equal to $-6$ \cite[p.~114]{Jacobsondeexcepcionales}, which coincides with $-\sign \kappa\vert_{L_{\bar1}}$, due to choice of sign when multiplying two odd elements (see also Lemma~\ref{lema_parejas}a).

  
  \subsection{The gradings}

  All the $G$-gradings on $\J$ induce naturally $G$-gradings on $ \der(\J)$ and $G\times\Z_2$-gradings on $\L$. The Jordan algebra $\J$ is $\Z_2^5$-graded and $\Z\times\Z_2^3$-graded \cite[Theorem~7 and Corollary~1(3)]{reales}. Furthermore, the complexifications of the $ \Z_2^6$-grading and of the $\Z \times\Z_2^4$-grading induced on $\L$ are just  $\Gamma_7$ and $\Gamma_8$, respectively.  In particular, the real form $\sig{14}$ admits a fine $ \Z_2^6$-grading as well as a fine $\Z \times\Z_2^4$-grading.\smallskip
 
 The  fine gradings  on $\J$ were recalled in \cite{sig26} since   $(\der\J\oplus \J_0)^+$ has just signature $-26$, and hence both 
 $\sig{26}$ and $\sig{14}$  share the gradings coming from gradings on  the Jordan algebra $\J$. For completeness we   enclose a description here.

 The octonion algebra $\OO$ is   $\Z_2^3$-graded:
  \begin{equation}\label{eq_gradenO} 
    \OO_{(\bar1,\bar0,\bar0)}=\R e_1, \quad  \OO_{(\bar0,\bar1,\bar0)}=\R e_2, \quad \OO_{(\bar0,\bar0,\bar1)}=\R e_7.
 \end{equation}
This induces a $\Z_2^3$-grading on the Jordan algebra $\J$ by means of
\begin{equation}\label{eq_gradingdeoctoniones}
\begin{array}{l}
\J_{(\bar0,\bar0,\bar0)}=\sum_i\R E_{ii}\oplus\sum_i\R\iota_i(1),\\
\J_g=\sum_i\iota_i(\OO_g) \quad \textrm{ if }e\ne g\in\Z_2^3,
\end{array}
\end{equation}
where an arbitrary element  in $\J$ is denoted  by
 $$
 \left(\begin{array}{ccc}
 s_1&a_3&\bar a_2\\-\bar a_3&s_2&a_1\\a_2&-\bar a_1&s_3\end{array}\right)=:\sum_i s_iE_{ii}+\sum_i\iota_i(a_i),
 $$
 if $s_i\in\R$ and $a_i\in\OO$. 
 The grading \eqref{eq_gradingdeoctoniones} is compatible with the    $\Z_2^2$-grading on $\J$  defined by
$$
\begin{array}{ll}
\J_{(\bar0,\bar0)}=\sum_i\R E_{ii}, \quad &\J_{(\bar0,\bar1)}=\iota_1(\OO),\\
\J_{(\bar1,\bar0)}=\iota_2(\OO),&\J_{(\bar1,\bar1)}=\iota_3(\OO),
\end{array}
$$
getting then the mentioned $\Z_2^5$-grading on $\J$. 

Also, the $\Z\times\Z_2^3$-grading on $\J$ is obtained by refining  the $\Z_2^3$-grading \eqref{eq_gradingdeoctoniones} with the next   $\Z$-grading on $\J$:
$$
\begin{array}{l}{\J }_{-2}=\R({ E_{22}-E_{33}-\iota_1(1)}),\\
\J_{-1}=\{{ \iota_2(a)-\iota_3(\overline{a}):
a\in \OO}\},\\
{\J}_{0}= \span{\{E_{11}, E_{22}+E_{33},
\iota_1(a): a\in \OO_0\}},\\
{\J}_{1}=\{{
\iota_2(a)+\iota_3(\overline{a}): a\in \OO}\},\\
{\J}_{2}=\R({  E_{22}-E_{33}+\iota_1(1)}),
\end{array}
$$ 
which is just the eigenspace decomposition relative to  the derivation
$4[R_{\iota_1(1)},R_{E_{22}}]\in\der(\J)$.

 \section{A $\Z_2^3\times\Z_3^2$-grading based on the Tits construction}\label{deTits}
 \subsection{The model}
 In 1966 \cite{Tits}, Tits provided a beautiful unified construction of all the exceptional simple Lie algebras. We review here only some particular case of  Tits' construction to  
  get a (not usual) model of $\sig{14}$.

  Consider, for the Jordan algebra $\M =\mathcal{H}_3(\C,\gamma_3)$, the vector space
\begin{equation}\label{eq_TitsModel}
\T(\OO,\M):=\der(\OO)\oplus (\OO_0 \otimes \M_0) \oplus \der(\M),
\end{equation}
which is made into a Lie algebra over $\R$ by defining the multiplication   
(bilinear and anticommutative)
which agrees with the ordinary commutator on the Lie algebras $\der(\OO)$ and $\der(\M)$ and it satisfies
\begin{equation}\label{eq_TitsProduct}
\begin{array}{l}
\bullet\  {[}\der(\OO),   \der(\M)]=0, \\
\bullet\  {[}d, a\otimes x]=d(a) \otimes x, \\
\bullet\  [D, a\otimes x]=a \otimes D(x), \\
\bullet\  {[}a\otimes x, b\otimes y]= \frac13\tr(x\cdot y) d_{a,b}+[a,b]\otimes (x\ast y)+ 2t_\OO(ab)[R_x,R_y],
\end{array}
\end{equation}
for all $d\in \der(\OO)$, $D\in \der(\M)$, $a,b\in \OO_0$ and $x,y\in \M_0$. 
The used notations are, for   $a,b,c\in\OO$ and $x,y\in\M$,
\begin{itemize}
\item[$\circ$]
$[a,b]=ab-ba\in\OO_0$, 
\item[$\circ$]
 $
d_{a,b}:=[l_a,l_b]+[l_a,r_b]+[r_a,r_b]\in\der(\OO),
$
for $l_a(b)=ab$ and $r_a(b)=ba$  the left and right multiplication operators in $\OO$ respectively, so that 
$d_{a,b}(c)=[[a,b],c]+3(ac)b-3a(cb)$; 
\item[$\circ$]
$x*y:=x\cdot y-\frac13\tr(x\cdot y)I_3\in \M_0$, 
\end{itemize}
and again $R_x$ is the multiplication operator in the Jordan algebra and $[R_x,R_y]$ the commutator.

 \begin{pr}\label{pr_constTitsnuestra}
$\T(\OO,\M)\cong\sig{14}$ 
\end{pr}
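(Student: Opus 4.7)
The plan is to identify $L:=\T(\OO,\M)$ as a real form of $\mathfrak{e}_6$ and then pin down the signature of its Killing form using the natural $\Z_2$-grading, without having to compute the restriction of $\kappa_L$ to the odd part directly.

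First, I would check from the bracket formulas in \eqref{eq_TitsProduct} that $L$ is $\Z_2$-graded with $L_{\bar 0}=\der(\OO)\oplus\der(\M)$ and $L_{\bar 1}=\OO_0\otimes \M_0$, and that complexifying each ingredient yields the classical complex Tits construction $\T(\OO^\C,\mathcal{H}_3(\C)^\C)\cong\mathfrak{e}_6$, a standard entry of the Freudenthal--Tits magic square. Hence $L^\C\cong\mathfrak{e}_6$, and $L$ is one of the five real forms of $\mathfrak{e}_6$.

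Second, I would identify the simple ideals of $L_{\bar 0}$. Since $\OO$ is the real division octonion algebra, $\der(\OO)\cong\mathfrak{g}_{2,-14}$, of signature $-14$. Because $\gamma_3=E_{11}+E_{23}+E_{32}$ is Hermitian of signature $(2,1)$, $\M$ is $\R$-isomorphic to $\mathcal{H}_3(\C,\diag(1,1,-1))$, and so $\der(\M)\cong\mathfrak{su}_{2,1}$, of signature $-8+4\cdot 2\cdot 1=0$ by the formula stated in Section~\ref{subse_realforms}. These two simple ideals are $\kappa_L$-orthogonal: for $d\in\der(\OO)$ and $D\in\der(\M)$, the operator $\ad d\,\ad D$ annihilates $L_{\bar 0}$ and acts on $L_{\bar 1}$ as $d\vert_{\OO_0}\otimes D\vert_{\M_0}$, with trace $\tr(d\vert_{\OO_0})\,\tr(D\vert_{\M_0})=0$ (both factors vanish, since derivations of a semisimple algebra act with zero trace in any representation). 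Applying Lemma~\ref{lema_kil} to each simple summand yields
\[
\sign(\kappa_L\vert_{L_{\bar 0}})=\sign(\der(\OO))+\sign(\der(\M))=-14+0=-14.
\]

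Finally, I would invoke Lemma~\ref{lema_parejas}(b) to get $\sign(L)+\sign(L^{-1})=2\sign(\kappa_L\vert_{L_{\bar 0}})=-28$. Since $L^{-1}$ can be realized inside $L^\C$ as the real subspace $L_{\bar 0}\oplus\ii L_{\bar 1}$, it is also a real form of $\mathfrak{e}_6$, so both $\sign(L)$ and $\sign(L^{-1})$ lie in $\{-78,-26,-14,2,6\}$; a direct inspection of pairwise sums of elements of this set shows that $-28$ is attained only as $(-14)+(-14)$. Therefore $\sign(L)=-14$ and, by the classification recalled after \eqref{eq_correspondencia}, $L\cong\sig{14}$. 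The most delicate point is the clean identification $\der(\mathcal{H}_3(\C,\gamma_3))\cong\mathfrak{su}_{2,1}$; once this is secured, the remainder is short bookkeeping with Killing forms and signatures that sidesteps any direct computation on the odd part $L_{\bar 1}$.
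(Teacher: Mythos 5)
There is a genuine gap at the very first step, and the rest of your argument rests on it. The decomposition with $L_{\bar0}=\der(\OO)\oplus\der(\M)$ and $L_{\bar1}=\OO_0\otimes\M_0$ is \emph{not} a $\Z_2$-grading of $\T(\OO,\M)$: the last formula in \eqref{eq_TitsProduct} shows that the bracket of two elements of $\OO_0\otimes\M_0$ contains the term $[a,b]\otimes(x*y)$, which lies again in $\OO_0\otimes\M_0$ and is generically nonzero (take $a=e_1$, $b=e_2$ and $x=y=\diag(2,-1,-1)\in\M_0$, for which $x*x\neq0$). A quick sanity check via Table~\ref{tabladeautomorfysignaturas}: the even part of a nontrivial $\Z_2$-grading on a real form of $\e6$ complexifies to the fixed subalgebra of an order-two automorphism of $\e6$, hence has complex dimension $36$, $38$, $46$ or $52$, whereas $\dim(\der(\OO)\oplus\der(\M))=14+8=22$. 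Consequently Lemma~\ref{lema_parejas} cannot be invoked: the rescaled product \eqref{eq_Lapareada} does not turn your $L^{-1}$ into a Lie algebra when $[L_{\bar1},L_{\bar1}]\not\subset L_{\bar0}$, so $L^{-1}$ is not a real form of $\e6$ and the identity $\sign(L)+\sign(L^{-1})=2\sign(\kappa\vert_{L_{\bar0}})$ is unavailable. Your identifications $\der(\OO)\cong\mathfrak{g}_{2,-14}$ and $\der(\M)\cong\mathfrak{su}_{2,1}$ (signature $0$) are correct, and the three summands in \eqref{eq_TitsModel} are indeed pairwise orthogonal for the Killing form; but that only yields $\sign(L)=-14+0+\sign(\kappa\vert_{\OO_0\otimes\M_0})$, and the contribution of the $56$-dimensional middle piece still has to be controlled.

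The paper closes exactly this point by computing $\kappa(a\otimes x,b\otimes y)=-60\,n(a,b)\tr(x\cdot y)$ for $a,b\in\OO_0$ and $x,y\in\M_0$. Since $n\vert_{\OO_0}$ is positive definite of rank $7$, the signature of $\kappa\vert_{\OO_0\otimes\M_0}$ equals $-7$ times the signature of the traceform on $\M_0$, which is $0$; alternatively, it is a multiple of $7$, and $-14+7k\in\{-78,-26,-14,2,6\}$ already forces $k=0$, which is the kind of ``inspection of admissible signatures'' you were aiming for. To repair your write-up, either carry out this tensor-factorization of $\kappa$ on $\OO_0\otimes\M_0$, or replace your decomposition by an actual $\Z_2$-grading of $\T(\OO,\M)$ whose even part you can identify.
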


\begin{proof}
 It is well-known that $\T(\OO^\C,\M^\C)$ is a complex Lie algebra of type $\mathfrak e_6$, so that we only need to compute the signature of the Killing form $\kappa$ of the real form $\T(\OO,\M)$. As far as we know, it can not be found in the literature. 
 For the computation, we follow the lines  of the proof of \cite[Proposition~2]{sig26}, although the ideas are mainly based in  Jacobson's book \cite{Jacobsondeexcepcionales}. Observe the following facts:
 \begin{itemize}
 \item[a)]  $\der(\OO)$, $\OO_0 \otimes \M_0$ and $\der(\M)$ are three orthogonal subspaces for the Killing form (\cite[p.~116]{Jacobsondeexcepcionales}).
 \item[b)] If $d,d'\in \der(\OO)$, then $ \kappa(d,d')=12\tr(dd')=3\kappa_{\g2}(d,d')$, denoting by $\kappa_{\g2}$ the Killing form of the algebra $\der(\OO)=\frak{g}_{2,-14}$.
 This implies that the signature of $\kappa\vert_{\der(\OO)}$ is the same as the one of $\kappa_{\g2}$, that is, -14.
 \item[c)] If $D,D'\in \der(\M)$, then $ \kappa(D,D')=8\tr(DD')=8\kappa_{\frak{a}_2}(D,D')$, denoting by $\kappa_{\frak{a}_2}$ the Killing form of the algebra $ \der(\M)\cong\mathfrak{su}_{1,1}=\frak{a}_{2,0}$.
 This implies that the signature of $\kappa\vert_{\der(\M)}$ is the same   as the one of $\kappa_{\frak{a}_2}$, that is, $0$.
 \item[d)] For each $a,b\in\OO_0$ and $x,y\in\M_0$, we compute $ \kappa(a\otimes x,b\otimes y)=-60\, n(a,b)\tr(x\cdot y)$.
 As $n$  is positive definite, the signature of $\kappa\vert_{\OO_0 \otimes \M_0}$  will coincide with minus 7 times the signature of the traceform of $\M_0$ (the bilinear form $(x,y)\mapsto\tr(x\cdot y)$), which is equal to 0.  
 \end{itemize}
  Consequently, the signature of $\T(\OO,\M)$ turns out to be
 $-14$.
 \end{proof}
 
It can be observed that  the facts about signatures in items b) and c) are also consequences of Lemma~\ref{lema_kil}, and that it was no necessary   to compute the signature of the traceform in d) since -14 is the only signature of a real form of $\e6$ which is multiple of 7.

 \subsection{The grading}
 A $\Z_2^3\times\Z_3^2$-grading is achieved when considering the   $\Z_2^3$-grading on $\OO$  given by Eq.~\eqref{eq_gradenO} and the following fine $\Z_3^2$-grading on $\M$: 
\begin{equation}\label{eq_Z32}
 \deg  \begin{pmatrix}
 0&0 & 1  \cr
 1&0&0 \cr
 0&1&0
 \end{pmatrix} =(\bar1,\bar0);\quad \deg  \begin{pmatrix}
 1&0 & 0  \cr
 0&\omega&0 \cr
 0&0&\omega^2
 \end{pmatrix} =(\bar0,\bar1),
 \end{equation}
 since $\M$ is generated (as an algebra) by these two matrices. 
 There is  a multiplicative basis of $\M$ (the product of two elements in such a  basis is multiple of a third one) formed by homogeneous elements of the 
   $\Z_3^2$-grading, in fact, all of them are invertible matrices.
 In particular $\M_0$ breaks into eight one-dimensional homogeneous components. 
 The obtained $\Z_2^3\times\Z_3^2$-grading on $\T(\OO,\M)\cong\sig{14}$ is fine, since its complexified grading is just $\Gamma_3$.

\begin{re} \rm{Tits' construction can be applied by replacing in $\T(\OO,\M)$   the Jordan algebra $\M$ with $\mathcal{H}_3(\mathcal C,\gamma_i)$, for $\mathcal C\in\{\C,\R\oplus\R\}$, or the octonion algebra $\OO$ with the split octonion algebra $\OO_s$, getting in this way all the five real forms of $\e6$.
 The same arguments as in Proposition~\ref{pr_constTitsnuestra} can be used to conclude  that $\T(\OO_s,\M)\cong\mathfrak{e}_{6,2}$, which implies that 
$\mathfrak{e}_{6,2}$ possesses also a fine $\Z_2^3\times\Z_3^2$-grading whose complexified grading is $\Gamma_3$.}\end{re}

 \section{Flag model and related gradings over $\Z\times\Z_2^5$ and $\Z^2\times\Z_2^3$}\label{flag}
 
 Recall   that $\Gamma_{12}$ and $\Gamma_{10}$, the fine $\Z\times\Z_2^5$ and $\Z^2\times\Z_2^3$-gradings on $S=\e6$, 
 are both refinements of the   $\Z$-grading on $S$ produced by the one-dimensional torus  $\{t_{1,s,1,1,1,1}:s\in\C^\times\} $ contained in the maximal torus $T$ considered in \eqref{eq_toro}.
 In other words, if $S=\mathfrak{h}\oplus(\oplus_{\alpha\in\Phi }S_\alpha)$ is the root decomposition relative to a Cartan subalgebra $\mathfrak{h}$, and $\{\alpha_i\}_{i=1}^6$ is a set of simple roots of the root system $\Phi$  (ordered as in Figure~\ref{fig_Satake}),
 the $\Z$-grading is $S=\oplus_{n=-2}^2S_n$, for
 \begin{equation}\label{eq_lasgrads}
 S_n=\{\oplus_{\alpha\in\Phi}S_\alpha:\alpha=\sum_i k_i\alpha_i,k_2=n\}.
 \end{equation}
 By counting roots, we see that 
 $S_0=\mathfrak{h}\oplus\{\oplus_{\alpha\in\Phi}S_\alpha:\alpha=\sum_i k_i\alpha_i,k_2=0\}$ has dimension $36$, 
 $\dim S_{\pm1}=20$ and $\dim S_{\pm2}=1$. Also, the Dynkin diagram of the semisimple part of the homogeneous component $S_0$ is obtained by removing  the node $\alpha_2$ of  the Dynkin diagram, so $S_0$ has type $A_5\oplus\C$ ($\C$ denoting here a one-dimensional centre). 
 A linear model useful for our purposes is developed in \cite[\S3.4]{Weyle6}. Take $V$ a $6$-dimensional vector space over $\C$, 
 and then
 \begin{equation}\label{eq_la5grad}
S_V:=\wedge^6V^*\oplus\wedge^3V^*\oplus\mathfrak{gl}(V)\oplus\wedge^3V\oplus\wedge^6V
 \end{equation}
can be endowed with a Lie algebra structure such that $S_V\cong\e6$. The products can be  described with multilinear algebra: in terms of contractions, of the wedge products, and   also making use of the dualization of the product $\cdot\colon S_n\times S_{-n}\to\C$, which gives the bracket $[S_n, S_{-n}]\subset S_0$. Precisely, if $u\in S_n$, $f\in S_{-n}$, then $[u,f]$ denotes the only element in $\mathfrak{gl}(V)$ characterized by 
\begin{equation}\label{eq_producots}
\tr(g\circ[u,f])=g(u)\cdot f\in\C
\end{equation}
 for all $g\in \mathfrak{gl}(V).$

 The first question, then, is if $\sig{14}$ inherits this $\Z$-grading. 
 This can be answered affirmatively as a consequence of the following result due to Cheng \cite[Theorem~3]{Cheng}.

\begin{pr}
A simple real Lie algebra $L$ admits a grading $L=L_{-2}\oplus L_{-1}\oplus L_{0}\oplus L_{1}\oplus L_{2}$ such that $\dim L_2=1$ if and only if there is a   long root corresponding to a white node such that its restricted multiplicity is equal to 1.
\end{pr}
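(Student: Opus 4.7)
The plan is to translate both sides of the equivalence into Cartan-subalgebra data via the Satake diagram and then match them. First, I would recall the standard parametrization over $\C$: contact $\Z$-gradings on a complex simple Lie algebra $S$ (that is, $5$-gradings with a $1$-dimensional top) correspond bijectively to simple roots $\alpha_i$ whose coefficient in the highest root $\theta$ is $2$. Concretely, fixing a Cartan subalgebra $\mathfrak{h}\subset S$ and a base $\{\alpha_1,\dots,\alpha_l\}$, one takes the grading element $H_i\in\mathfrak{h}$ characterized by $\alpha_j(H_i)=2\delta_{ij}$, and defines $S_n=\{x\in S:[H_i,x]=nx\}$; with this choice $S_{\pm 2}=S_{\pm\theta}$ is $1$-dimensional.

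Next, for a real form $L$ of $S$ with conjugation $\si$, the grading descends (i.e., $L=\oplus L_n$ with $(L_n)^{\C}=S_n$) if and only if $H_i$ can be chosen inside $L$. Choosing a maximally noncompact Cartan subalgebra $\mathfrak{h}=\mathfrak{t}\oplus\mathfrak{a}$ (with $\mathfrak{a}\subset L$ and $\mathfrak{t}$ its compact complement), the Satake diagram records the restriction map to $\mathfrak{a}^*$: black nodes are simple roots vanishing on $\mathfrak{a}$, white nodes are those with nontrivial restriction, and arrows (together with the black ones) encode the Satake involution. The restricted multiplicity of a simple restricted root $\bar\alpha_i$ counts the number of roots of $\Phi$ whose restriction equals $\bar\alpha_i$.

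For the \emph{if} direction, assuming $\alpha_i$ is a long white node of restricted multiplicity one, its Satake-orbit is a singleton and no black root contributes to $\bar\alpha_i$; this forces the dual element $H_i$ to lie in $\mathfrak{a}\subset L$, so the grading of $S$ descends to $L$. The real top component $L_2$ is then the real form of $S_{\pm\theta}$, and its dimension is controlled by the multiplicity of the highest restricted root; the longness hypothesis guarantees that $\theta$ restricts faithfully to a root of the restricted system of the same length as $\bar\alpha_i$, so $\dim L_2=1$. Conversely, given a $5$-grading $L=\oplus L_n$ with $\dim L_2=1$, I would complexify to obtain a contact grading of $S$ and read off its grading element $H$; the fact that $H\in\mathfrak{a}$ forces $H$ to be a positive-rational combination of the $H_i$ attached to white nodes fixed by the Satake involution. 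Dimension-counting on $L_2$ versus $S_{\pm\theta}$ then isolates a single such $\alpha_i$ and forces its restricted multiplicity to equal one, while $\dim L_2=\dim S_\theta=1$ constrains $\alpha_i$ to be long.

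The main obstacle is the careful handling of the \emph{long} hypothesis and its interaction with restricted multiplicities. Controlling which roots $\beta\ne\theta$ of $S$ satisfy $\beta(H_i)=2$, and showing that under the multiplicity-one hypothesis none of them contribute additional real dimension to $L_2$, requires a case-by-case inspection on the list of Satake diagrams and the coefficients of the highest roots. In practice this amounts to matching contact gradings of real simple Lie algebras with Hermitian or quaternionic symmetric data already tabulated in the literature on maximal parabolic subalgebras of real simple Lie algebras.
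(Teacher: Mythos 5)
A preliminary remark: the paper offers no proof of this proposition at all --- it is quoted from \cite[Theorem~3]{Cheng} and used as a black box to produce the contact grading on $\sig{14}$ --- so there is no in-paper argument to compare yours against. Judged on its own terms, your sketch has the right general shape (translate both sides into Cartan--Satake data), but it contains one incorrect step and leaves the genuinely delicate points unproved. The incorrect step is the opening ``standard'' parametrization: contact gradings of a complex simple $S$ do \emph{not} biject with the simple roots of coefficient $2$ in the highest root $\theta$. The paper's own Remark~\ref{re_solouncontacto} refutes this for $\mathfrak e_6$: the nodes $\alpha_2,\alpha_3,\alpha_5$ all have coefficient $2$ in $\tilde\alpha$, yet only $\alpha_2$ gives $\dim S_2=1$; for $\alpha_3$ and $\alpha_5$ the corner contains at least two root spaces. (The contact grading is in fact unique up to conjugacy: it is the eigenspace decomposition of $\ad h_\theta$ for $h_\theta$ the coroot of the highest root, with $S_{\pm 2}=S_{\pm\theta}$. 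Note also that with your normalization $\alpha_j(H_i)=2\delta_{ij}$ the root $\theta$ lands in degree $4$, not $2$.) Because of this, your ``if'' direction does not close: even granting that multiplicity one forces the node to be arrow-free and hence the complex grading to be stable under the conjugation (which is the correct mechanism --- your phrase ``forces $H_i\in\mathfrak a$'' states the conclusion rather than the argument), descent only yields $\dim_\R L_2=\dim_\C S_2$, and you have not ensured that $\dim_\C S_2=1$ for the node you selected.

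The second gap is that the converse, and the role of the word ``long'', are only gestured at; your final paragraph explicitly defers them to ``case-by-case inspection'' of Satake diagrams, which is precisely what a proof must avoid or carry out. The cleaner route --- and the one the paper's application actually suggests --- works entirely inside the restricted root system $\bar\Phi\subset\mathfrak a^*$: up to conjugacy every $\Z$-grading of $L$ is the eigenspace decomposition of $\ad H$ for some $H$ in the maximal split toral subalgebra $\mathfrak a$, so $L_2$ is the sum of the restricted root spaces $L_{\bar\mu}$ with $\bar\mu(H)=2$ and $\dim L_2=\sum_{\bar\mu(H)=2}m(\bar\mu)$, where $m$ denotes restricted multiplicity. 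Requiring a $5$-grading with one-dimensional top forces $H$ to be the coroot of a restricted root $\bar\lambda$ with $\langle\bar\mu,\bar\lambda^\vee\rangle\in\{0,\pm1,\pm2\}$ for every $\bar\mu$, with the value $2$ attained only at $\bar\lambda$, and $m(\bar\lambda)=1$; this is equivalent to the existence of a long root $\alpha\in\Phi$ with $\alpha\vert_{\mathfrak a}\neq 0$ and restricted multiplicity one, which is exactly the condition the paper verifies for $\sig{14}$ with $\alpha=\tilde\alpha$ and $\bar\lambda=2\beta_1$. Reformulating your argument in these terms would both repair the ``if'' direction and give the converse for free.
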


This can be applied to $L=\sig{14}$ as detailed in \cite[17.6]{Satakes}. Precisely,
  the Satake diagram is the following
 \begin{center}{
\begin{picture}(23,5)(4,-0.5)
\put(5,0){\circle{1}} \put(9,0){\circle*{1}} \put(13,0){\circle*{1}}
\put(17,0){\circle*{1}} \put(21,0){\circle{1}}
\put(13,4){\circle{1}}
\put(5.5,0){\line(1,0){3}}
\put(9.5,0){\line(1,0){3}} \put(13.5,0){\line(1,0){3}}  
\put(17.5,0){\line(1,0){3}}
\put(13,0.5){\line(0,1){3}}
\put(4.5,1){$\scriptstyle \alpha_1$} \put(8.5,1){$\scriptstyle \alpha_3$}
\put(12.85,1){$\scriptstyle \alpha_4$} \put(16.5,1){$\scriptstyle \alpha_5$}
\put(20.5,1){$\scriptstyle \alpha_6$} \put(13.9,3.6){$\scriptstyle \alpha_2$}
\cbezier (6,-1)(11,-2.5)(15,-2.5)(20,-1) 
 \put(20,-1){\vector(2,1){1}}
 \put(6,-1){\vector(-2,1){1}}
 \end{picture}
 \begin{figure}[!h]
\caption{Satake diagram of $\sig{14}$}\label{fig_Satake}
\end{figure}
}\end{center}\vspace{-0.3cm}
The restricted root system is of type $BC_2$, namely,
$$
\pm\{\b_1,\b_2,\b_1+\b_2,2\b_1+\b_2,2\b_1,2\b_1+2\b_2\}\subset\mathfrak{a}^*,
$$
for $\mathfrak{a}$ a maximal abelian subalgebra of the eigenspace of a Cartan involution relative to the eigenvalue $-1$. 
There is a (long) root, the maximal one 
$\tilde\a=\alpha_1+2\alpha_2+2\alpha_3+3\alpha_4+2\alpha_5+\alpha_6$,
such that the restricted root $\tilde\a\vert_{\mathfrak a}=2\b_1\ne0$ has restricted multiplicity equal to 1
(in other words, $\{\a\in\Phi:\a\vert_{\mathfrak a}=\tilde \a\vert_{\mathfrak a}\}=\{\tilde\a\}$).
So, the diagonalization of any non-zero element   in the corresponding root space   produces the desired $\Z$-grading. 

\begin{re}\label{re_solouncontacto}\rm{
Note that the only $\Z$-grading $S=\oplus_{n=-2}^2 S_n$ of the complex Lie algebra $\e6$ such that $\dim S_2=1$ is that one in \eqref{eq_lasgrads}, up to \emph{isomorphism}. This type of gradings  usually receives the name of \emph{contact} gradings. We can argue as follows. By \cite[Chapter~3]{enci}, any $\Z$-grading on $S$ comes from a choice of non-negative integers $(l_1,\dots,l_6)$ in  such a way that  (for $n\ne0$)
\begin{equation*} 
 S_n=\{\oplus_{\alpha\in\Phi}S_\alpha:\alpha=\sum_i k_i\alpha_i,n=\sum_i k_i l_i\}.
 \end{equation*}
 For instance, the grading \eqref{eq_lasgrads} corresponds to the choice $(0,1,0,0,0,0)$. For a grading with just 5 pieces, as the maximal root is $\tilde\alpha=\alpha_1+2\alpha_2+2\alpha_3+3\alpha_4+2\alpha_5+\alpha_6$, then $2=l_1+2l_2+2l_3+3l_4+2l_5+l_6$, and the only new possibilities are $(1,0,0,0,0,1)$,   $(0,0,1,0,0,0)$ and $(0,0,0,0,1,0)$. But in the three cases, the dimension of the \emph{corner} $S_2$ is strictly greater than 1, since the root space related to the root $\alpha_1+\alpha_2+2\alpha_3+3\alpha_4+2\alpha_5+\alpha_6$ would be contained in $S_2$ too.
}\end{re}

We need     a more specific model of the $\Z$-grading on $\sig{14}$, in order to 
obtain refinements   whose complexified gradings are, respectively,   $\Gamma_{12}$ and $\Gamma_{10}$.

 \subsection{The model}  
Consider $L_0= \mathfrak{su}_{5,1}\oplus\R I_6$.  
We can describe the semisimple part\footnote{It is convenient to describe the semisimple part separately, because the center of the real Lie algebra 
$\{x\in \mathfrak{gl}(V): b(xu,v)+b(u,xv)=0 \;\forall u,v\in V\}$ is $\R\ii I_6$.}   as 
 \begin{equation*}
L'_0=[L_0,L_0] = \{x\in \mathfrak{sl}(V): b(xu,v)+b(u,xv)=0 \;\forall u,v\in V\},
\end{equation*} 
where $b$ is the hermitian form (linear in the first variable and conjugate-linear in the second one) given, relative   to a fixed $\C$-basis   $B_V=\{e_1,\dots,e_6\}$ of $V$, by
\begin{equation*}
b:   V\times V   \longrightarrow   \C,\qquad 
  b(\Sigma s_ie_i,\Sigma t_ie_i)  =    s_1 \bar{t}_1 + \dots + s_5 \bar{t}_5  -s_6 \bar{t}_6.
\end{equation*}
First, note that the hermitian form $b$ induces the conjugate-linear ($\phi(s v)=\overline{s}v$) $L'_0$-module isomomorphism ($\phi(x\cdot v)=x\cdot\phi(v)$)
\begin{equation*}
\begin{array}{cccl}
\phi\colon & V & \longrightarrow & V^*\\
& v & \longmapsto & b(-,v),
\end{array}
\end{equation*}
for $s\in\C$, $x\in L'_0$, $v\in V$.
This can be extended to another conjugate-linear $L'_0$-module isomorphism
\begin{equation*}
\begin{array}{cccl}
\tilde{\phi}: & \wedge^3V & \longrightarrow & \wedge^3V^*\\
& v_1\wedge v_2 \wedge v_3 & \longmapsto & \phi(v_1) \wedge \phi(v_2) \wedge \phi(v_3).
\end{array}
\end{equation*}
Second, our choice of basis $B_V$ allows us to identify  $S_0=\mathfrak{gl}(V)$ with $\mathfrak{gl}_6(\C)$ and $\wedge^6V$ with $ \C$ (by $e_1\wedge\dots\wedge e_6\mapsto1$). Then we have the $S_0$-module isomorphism
\begin{equation*}
\begin{array}{cccl}
\psi: & \wedge^3V & \longrightarrow & (\wedge^3V)^*\\
& v_1\wedge v_2 \wedge v_3 & \longmapsto & \psi_{v_1, v_2, v_3},
\end{array}
\end{equation*}
 where
 \begin{equation*}
\begin{array}{cccl}
\psi_{v_1, v_2, v_3}: & \wedge^3V & \longrightarrow & \C\equiv \wedge^6V\\
& v_4\wedge v_5 \wedge v_6 & \longmapsto & v_1 \wedge v_2 \wedge v_3 \wedge v_4\wedge v_5 \wedge v_6.
\end{array}
\end{equation*}
Third, recall that   $\wedge^3V^*$ and $(\wedge^3V)^*$ can be naturally identified by means of the $S_0$-module isomorphism
\begin{equation*}
\begin{array}{cccl}
\rho: & \wedge^3V^* & \longrightarrow & (\wedge^3V)^*\\
& f_1\wedge f_2 \wedge f_3 & \longmapsto & \rho_{f_1, f_2, f_3},
\end{array}
\end{equation*}
 where
 \begin{equation*}
\begin{array}{cccl}
\rho_{f_1, f_2, f_3}: & \wedge^3V & \longrightarrow & \C\\
& v_1\wedge v_2 \wedge v_3 & \longmapsto & \det(f_i(v_j)).
\end{array}
\end{equation*}
We are taking $v_i\in V$, $f_i\in V^*$.

We consider the  composition of the maps to obtain an $S_0$-module isomorphism 
\begin{equation*}
\rho^{-1} \psi : \wedge^3V \longrightarrow \wedge^3V^*,
\end{equation*}
which is in particular an $L_0$-module isomorphism (${L_0}^\C=S_0$). By composing the inverse of this map and the above (conjugate-linear) $L'_0$-module  isomorphism $\tilde{\phi}$, we obtain a conjugate-linear $L'_0$-module isomorphism
\begin{equation*}
\psi^{-1} \rho ~\tilde{\phi} : \wedge^3V \longrightarrow \wedge^3V.
\end{equation*}
It has order two, so $\wedge^3V$ is the sum of the eigenspaces corresponding to the eigenvalues $\pm1$.
We will construct a real form $L=\oplus L_n$ of $S_V$, with $L_1$ one of the eigenspaces (and, after identifying with the dual, $L_{-1}$ the other one).
Note that the (real) dimension of our $L'_0$-module $\wedge^3V$ is $ 2 \dim_{\C} (\wedge^3V) = 40$
and its complexification is the (non-irreducible) $S'_0 =[S_0,S_0]$-module $\wedge^3V\oplus \wedge^3V$.

\begin{lm}
Under the previous assumptions, the eigenspaces $ \mathrm{Ker} (\psi^{-1} \rho ~\tilde{\phi}\mp\id)$ are two $L'_0$-submodules of $\wedge^3V$, each one of dimension $ 20$.  
\end{lm}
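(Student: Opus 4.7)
The plan is to split the claim into three independent checks: first, that $T:=\psi^{-1}\rho\,\tilde\phi$ commutes with the $L_0'$-action; second, that $T^2=\id$; and third, that each eigenspace has real dimension $20$.

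For the first point, both $\psi$ and $\rho$ are $\C$-linear $S_0$-module isomorphisms, hence in particular $L_0'$-module isomorphisms, while $\tilde\phi$ is a conjugate-linear $L_0'$-module isomorphism. The composition $T$ is therefore a conjugate-linear $L_0'$-module endomorphism of $\wedge^3 V$, and so each of its eigenspaces is automatically an $L_0'$-invariant real subspace.

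The main step, and the only one requiring actual calculation, is $T^2=\id$. I would carry it out on the basis $\{e_I:I\subset\{1,\dots,6\},\,|I|=3\}$. From $b(e_i,e_j)=\varepsilon_i\delta_{ij}$ with $\varepsilon_i=1$ for $i\le 5$ and $\varepsilon_6=-1$, one has $\phi(e_j)=\varepsilon_j e_j^*$, hence $\tilde\phi(e_I)=\sigma_I\,e_I^*$ with $\sigma_I:=\prod_{i\in I}\varepsilon_i\in\{\pm1\}$. Next, $\rho(e_I^*)$ is the functional $e_J\mapsto\delta_{IJ}$, and $\psi^{-1}$ sends that functional to $\eta(I^c,I)\,e_{I^c}$, where $\eta(A,B)\in\{\pm1\}$ is defined by $e_A\wedge e_B=\eta(A,B)\,e_1\wedge\dots\wedge e_6$. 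Composing, $T(e_I)=\sigma_I\,\eta(I^c,I)\,e_{I^c}$, and conjugate-linearity of $T$ (harmless here because the coefficient is real) yields
\[
T^2(e_I)=\sigma_I\sigma_{I^c}\,\eta(I,I^c)\,\eta(I^c,I)\,e_I=(-1)(-1)\,e_I=e_I,
\]
since exactly one of $I,I^c$ contains the index $6$, giving $\sigma_I\sigma_{I^c}=-1$, and $|I||I^c|=9$ is odd, giving $\eta(I,I^c)\,\eta(I^c,I)=(-1)^{|I||I^c|}=-1$.

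Once $T$ is known to be a conjugate-linear involution of the $20$-dimensional complex vector space $\wedge^3 V$, it is a real structure: $\mathrm{Ker}(T-\id)$ is a real form of $\wedge^3 V$, of real dimension $\dim_{\C}\wedge^3 V=\binom{6}{3}=20$, and $\mathrm{Ker}(T+\id)=\ii\,\mathrm{Ker}(T-\id)$ also has real dimension $20$. The main (indeed only) obstacle is keeping the sign bookkeeping in the computation of $T^2$ tidy; everything else is either formal, or a standard fact about real structures on complex vector spaces.
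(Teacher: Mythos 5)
Your proof is correct, and the sign computation at its heart --- the map $T=\psi^{-1}\rho\,\tilde\phi$ sends $e_I$ to $\pm e_{I^c}$, with the sign governed by $\prod_{i\in I}s_i$ and the shuffle sign of $(I,I^c)$ --- is exactly the computation underlying the paper's argument. The packaging differs, though. The paper does not prove $T^2=\id$ abstractly; it simply exhibits explicit real bases of the two eigenspaces, namely $e_{ijk}\mp\mathrm{sgn}(\sigma)\,s_is_js_k\,e_{lmn}$ and $\ii(e_{ijk}\pm\mathrm{sgn}(\sigma)\,s_is_js_k\,e_{lmn})$, and reads off the dimension $20$ by counting (ten complementary pairs $\{I,I^c\}$, two vectors each). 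Your route --- establish that $T$ is a conjugate-linear involution commuting with the $L_0'$-action, hence a real structure on the $20$-dimensional complex space $\wedge^3V$, whose two eigenspaces are interchanged by multiplication by $\ii$ --- is more conceptual and has the added virtue of explaining the parenthetical remark following the lemma: for a hermitian form of signature $(4,2)$ one gets $\sigma_I\sigma_{I^c}=+1$, so $T^2=-\id$ (a quaternionic structure) and there are no nonzero real eigenvectors, whereas for $(3,3)$ one again gets $T^2=\id$. The only thing you give up is that the paper's explicit eigenbasis is not gratuitous: it is reused immediately afterwards to describe $L_{-1}$ and to verify $[L_1,L_{-1}]\subset L$ in the proposition that follows, so in the paper's economy the explicit list does double duty.
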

\begin{proof}
 We   use the notation $e_{i_1\dots i_s}:=e_{i_1}\wedge \dots \wedge e_{i_s}$, so that  $\{e_{ijk},\ii e_{ijk}: 1\le i<j<k\le6\}$ is a real basis of $\wedge^3V$. Besides we denote   $s_i=1$ if $i=1,\dots 5$ and $s_6=-1$. A basis of the real space $\mathrm{Ker}  (\psi^{-1} \rho ~\tilde{\phi}-\id)$ is provided by
\begin{equation}\label{eq_basedeL1}
\left\{
\begin{array}{l}
 e_{i j k} -\mathrm{sgn}(\sigma)\,s_i s_j s_k\, e_ {lmn},\vspace{6pt}\\    
 \ii(e_{i j k}+\mathrm{sgn} (\sigma)\,s_i s_j s_k\, e_ {lmn}) 
 \end{array}\right|\left.
 \begin{array}{l}
 \sigma=\scriptsize{\begin{pmatrix}1&2&3&4&5&6\\i &j&k&l&m&n\end{pmatrix}}\in S_6\vspace{1pt}\\  i<j<k,\, l<m<n,\, i<l
  \end{array}
 \right\}.
\end{equation}
Similarly 
 $\{e_{i j k}+ \mathrm{sgn}(\sigma)\,s_i s_j s_k\, e_ {lmn}, \ii(e_{i j k}- \mathrm{sgn} (\sigma)\,s_i s_j s_k\, e_ {lmn})\}
$, with the permutation $\sigma$ as above, gives a basis of the eigenspace related to   $-1$. 
\end{proof}

(This lemma follows being true if we take the hermitian form $b$ of signature $(3,3)$, but it is false for signature $(4,2)$.)

We have already the tools to provide a convenient model of the real form.
\begin{pr} 
Take the real vector space 
$$
L=L_{-2}\oplus  L_{-1}\oplus L_{0}\oplus L_{1}\oplus L_{2}\subset S_V,
$$
for $L_0= \mathfrak{su}_{5,1}\oplus\R I_6$ and
\begin{equation}\label{eq_L_i}
\begin{array}{l}
L_1 := \{v\in \wedge^3V : \rho \tilde{\phi}(v)=\psi(v)\},\\
L_2  :=  [L_1,L_1],\\
L_{-1} := \tilde{\phi}\big(\{v\in \wedge^3V : \rho \tilde{\phi}(v)=-\psi(v)\}\big),\\
L_{-2}  :=  [L_{-1},L_{-1}].
\end{array}
\end{equation}
Then $L$ is a real form of $S_V\cong\e6$ of signature $-14$.
\end{pr}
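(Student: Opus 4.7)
The plan is to verify three things: (i) closure, i.e.\ $L$ is a real Lie subalgebra of $S_V$; (ii) the real form property $L^{\mathbb{C}}=S_V$; and (iii) $\sign(L)=-14$.

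For (i), since $S_V=\bigoplus_{n=-2}^{2}S_n$ is a $\Z$-graded algebra and $L_n\subseteq S_n$, it suffices to check $[L_i,L_j]\subseteq L_{i+j}$ for each pair. The inclusions $[L_{\pm 1},L_{\pm 1}]\subseteq L_{\pm 2}$ hold by definition. For $[L_0,L_n]\subseteq L_n$ with $n\neq 0$: each $L_{\pm 1}$ is the eigenspace of a conjugate-linear involution that intertwines the action of $L'_0=\mathfrak{su}_{5,1}$, hence is $\mathfrak{su}_{5,1}$-stable, and the central line $\R I_6\subset L_0$ acts by a real scalar on each $S_n$; these properties then propagate via the bracket to $L_{\pm 2}$. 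The delicate case is $[L_1,L_{-1}]\subseteq L_0$: a priori this bracket lies in $S_0=\mathfrak{gl}_6(\C)$, and one must check it lies in the real form $\mathfrak{su}_{5,1}\oplus\R I_6$. Using the characterization \eqref{eq_producots}, the definition $L_{-1}=\tilde\phi\bigl(\ker(\psi^{-1}\rho\,\tilde\phi+\id)\bigr)$, and the conjugate-linearity of $\tilde\phi$, I would show that the traceless part of $[u,\tilde\phi(w)]$ is $b$-anti-hermitian and its trace is real. I expect this verification to be the main computational obstacle.

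For (ii), a dimension count suffices. The complexification of $L_0=\mathfrak{su}_{5,1}\oplus\R I_6$ is $\mathfrak{sl}_6(\C)\oplus\C I_6=\mathfrak{gl}_6(\C)=S_0$. Each $L_{\pm 1}$, being the fixed set of a conjugate-linear involution on the $20$-complex-dimensional $S_{\pm 1}$, has real dimension $20$ and complexification $S_{\pm 1}$. Each $L_{\pm 2}$, being a nonzero real subspace of the $1$-complex-dimensional $S_{\pm 2}$, has real dimension $1$ and complexification $S_{\pm 2}$. Summing gives $\dim_{\R}L=36+20+20+1+1=78=\dim_{\C}S_V$, and therefore $L^{\C}=S_V$.

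For (iii), the $\Z$-grading on $L$ forces $\kappa_L(L_n,L_m)=0$ whenever $m+n\neq 0$, and for $n>0$ the pairing $\kappa_L\colon L_n\times L_{-n}\to\R$ is nondegenerate between two totally isotropic subspaces, contributing signature $0$. Thus $\sign(L)=\sign(\kappa_L\vert_{L_0})$. The decomposition $L_0=\mathfrak{su}_{5,1}\oplus\R I_6$ is $\kappa_L$-orthogonal, because $\mathfrak{su}_{5,1}$ is perfect, so $\tr(\ad(x)\vert_{L_n})=0$ for every $x\in\mathfrak{su}_{5,1}$ and every $n$. Lemma~\ref{lema_kil} then yields $\sign(\kappa_L\vert_{\mathfrak{su}_{5,1}})=\sign(\mathfrak{su}_{5,1})=-(5^2+2\cdot 5)+4\cdot 5\cdot 1=-15$. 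Finally, $\kappa_L(I_6,I_6)=\tr(\ad(I_6)^2)>0$ since $\ad(I_6)$ is diagonalizable over $\R$ with nonzero eigenvalues on $L_{\pm 1}$ and $L_{\pm 2}$ and vanishing on $L_0$. Therefore $\sign(L)=-15+1=-14$.
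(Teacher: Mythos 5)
Your architecture is sound, and parts (ii) and (iii) essentially reproduce the paper's own argument: the paper likewise reduces the signature to $\sign\kappa\vert_{L_0}=1+\sign\kappa\vert_{L'_0}=1-15=-14$ via the orthogonality of the centre and the derived subalgebra, Lemma~\ref{lema_kil}, and the positivity of $\kappa(I_6,I_6)=2(3^2\dim L_1+6^2\dim L_2)$. The problem is part (i). You correctly isolate $[L_1,L_{-1}]\subseteq L_0$ as the crux, but you only announce a strategy (``I would show that the traceless part of $[u,\tilde\phi(w)]$ is $b$-anti-hermitian and its trace is real'') without carrying it out. That verification is not a formality to be deferred: it is the content of the proposition. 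The paper proves it by taking the explicit basis of $L_{\pm1}$ from the preceding lemma and computing, via the duality \eqref{eq_producots}, the action of $F=\ad\bigl(e^*_{ijk}-\mathrm{sgn}(\sigma)\,s_is_js_k\,e^*_{lmn}\bigr)$ on the several types of basis vectors of $L_1$, checking in each case that the image lies in $\langle E_{kl}-s_ls_k E_{lk},\ \ii(E_{kl}+s_ls_k E_{lk})\rangle$ or in $\R\,\ii(E_{ii}+E_{jj}+E_{kk}-E_{ll}-E_{mm}-E_{nn})$, i.e.\ inside $\mathfrak{su}_{5,1}$. This cannot be waved away, because the whole construction is sensitive to the signature of $b$: the paper remarks that already the eigenspace lemma fails for signature $(4,2)$, so the compatibility of the real structure on $\wedge^3V$ with the bracket is exactly where the hypothesis ``$(5,1)$'' does its work.

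A second, smaller gap sits in your dimension count: you assert that $L_{\pm2}$, ``being a nonzero real subspace of the $1$-complex-dimensional $S_{\pm2}$, has real dimension $1$.'' A nonzero real subspace of a complex line can perfectly well have real dimension $2$, in which case $L\cap\ii L\neq0$ and $L$ is not a real form even though $L^{\C}=S_V$ and $L$ is closed under the bracket. One must actually compute $[L_1,L_1]$ and observe, as the paper does, that it equals the real line $\R\,\ii\,e_1\wedge\dots\wedge e_6$ rather than all of $\wedge^6V$. Everything else --- the reduction of the remaining brackets to $[L_1,L_{-1}]$ by the Jacobi identity, the $L_0$-module structure of the $L_n$, and the signature computation --- is correct and coincides with the paper's route.
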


\begin{proof}
Note that $L_i$ are  not only $L'_0$-modules but $L_0$-modules, since $I_6$ acts    as the identity on $V$, and hence,  as $n\id_{\wedge^nV}$  and $-n\id_{\wedge^nV^*}$ for $n=3,6$.  
If $\{e_i^*\}_{i=1}^6\subset V^*$ is the dual basis of $\{e_i\}_{i=1}^6$ (i.e., $e_i^*(e_j)=\delta_{ij}$), and 
we denote by
$e^*_{i_1\dots i_s}:=e^*_{i_1}\wedge \dots \wedge e^*_{i_s}$, it is easily checked that 
$$L_{-1}=\langle\{e^*_{i j k}- \mathrm{sgn}(\sigma)\,s_i s_j s_k\, e^*_{lmn}, \ii(e^*_{i j k}+ \mathrm{sgn} (\sigma)\,s_i s_j s_k\, e^*_{lmn})\}\rangle
,$$
with $\si\in S_6$ as above. Also,
 $L_{2}=\R\ii e_{123456}$ and $L_{-2}=\R\ii e^*_{123456}$. In particular, $L_i^\C=S_i$ and $L^\C=S=\e6$. To conclude that $L$ is indeed a real form of $S$, we need to check 
 that $[L,L]\subset L$. Note that  $[L_2,L_{-2}]=\R I_6$, and $[L_1,L_{-2}]\subset[[L_1,L_{-1}],L_{-1}]$ by the Jacobi identity, so that  the only non-trivial fact to be checked is that $[L_1,L_{-1}]\subset L$. This computation requires of the products in \cite[\S3.4]{Weyle6} recalled in Eq.~\eqref{eq_producots}. 
 According to them, the map $F=\ad(e^*_{ijk}-\mathrm{sgn} (\sigma)\,s_i s_j s_k\, e^*_{lmn})\in\ad(L_{-1})$, for an arbitrary permutation $\sigma\in S_6$, 
 satisfies 
 $$
 \begin{array}{l}
 F( e_{ijk}-\mathrm{sgn} (\sigma)\,s_i s_j s_k\, e_{lmn})=0,\\
F( {\ii(e_{ijk}+\mathrm{sgn} (\sigma)\,s_i s_j s_k\, e_{lmn})})\in\R \ii (E_{ii}+E_{jj}+E_{kk}-E_{ll}-E_{mm}-E_{nn} ) \subset L'_0,\\
F({e_{ijl}-\mathrm{sgn} (\sigma')\,s_i s_j s_l\, e_{kmn}})\in\langle E_{kl}-s_ls_k\,E_{lk},\ii(E_{kl}+s_ls_k\,E_{lk})\rangle\subset L,\\
F({\ii(e_{ijl}+\mathrm{sgn} (\sigma')\,s_i s_j s_l\, e_{kmn}}))\in\langle E_{kl}-s_ls_k\,E_{lk},\ii(E_{kl}+s_ls_k\,E_{lk})\rangle\subset L,
 \end{array}
 $$
 for $\si'=(34)\circ\si$, and hence $\ad(L_{-1})(L_1)\subset L$, as required.
  
 In order to compute the signature of the obtained real form $L$, note that $\kappa(I_6,I_6)=\tr(\ad^2I_6)=2(3^2\dim L_1+6^2\dim L_2)>0$. Besides the nilpotent pieces $L_n$ with $n\ne0$ do not contribute to the computation of the signature, so that the signature of $L$ coincides with
 $\sign\kappa\vert_{L_0}=1+\sign\kappa\vert_{L'_0}$ (the center of $L_0$ and the derived algebra $L_0'$ are orthogonal). But $\kappa\vert_{L'_0}$ is a positive multiple of the Killing form $\kappa_{L'_0}$ by Lemma~\ref{lema_kil}, so that  $\sign\kappa\vert_{L'_0}=\sign(\mathfrak{su}_{5,1})=-15$  and the signature of $L$ turns out to be $-14$.
 \end{proof}
  
\begin{re} \rm{
The existence of an $L_0$-module $L_1$ such that ${L_1}^\C\cong \wedge^3V$ is well-known. It is usually said that the complex
$\mathfrak{su}_{5,1}$-module $\wedge^3V$ determines a \emph{real} representation. This does not happen for the module $V$, i.e., there does not exist an $L_0$-module whose complexification is isomorphic to the $S_0$-module $V$. The hypotheses to be satisfied by a complex module to determine   a real representation can be consulted in \cite[\S8]{libroreales}.  A more specific construction of $L_1$, quite similar to the  one in Eq.~\eqref{eq_L_i}, has been  sketched in \cite[p.~425-426]{parabolic}, although without using such existence to construct  real forms of $\e6$.
}\end{re} 
 
 \subsection{The gradings}
 A suitable description of the $\Z^2\times\Z_2^3$-grading  $\Gamma_{12}$, adapted to the model of $\e6$ in Eq.~\eqref{eq_la5grad}, is developed in \cite[\S4.5]{Weyle6}. 
Take   $\theta\colon S\to S$  defined by:
$$
\begin{array}{l}
    \theta(e_{\sigma(1)}\wedge e_{\sigma(2)}\wedge e_{\sigma(3)})
   := \text{sg}(\sigma)\, \ii e_{\sigma(4)}\wedge e_{\sigma(5)}\wedge e_{\sigma(6)}, \\
   \theta(e^*_{\sigma(1)}\wedge e^*_{\sigma(2)}\wedge e^*_{\sigma(3)})
   := -\text{sg}(\sigma) \,\ii e^*_{\sigma(4)}\wedge e^*_{\sigma(5)}\wedge e^*_{\sigma(6)}, \\
    \theta(s I_6+x):=s I_6-x^t,   \\
  \theta|_{S_2\oplus S_{-2}} :=-\id,
\end{array}
$$
for any $s\in \C$, $x\in\mathfrak{sl}(V)$, $\sigma\in S_6$. It is checked in \cite[\S3.4]{Weyle6} that $\theta$ is an (outer) order 2 automorphism of $S$ fixing a subalgebra of type $\mathfrak c_4$.
 For each $A\in\text{GL}(V)$, let   $ {\varphi_A}\colon S\to S$ be the linear map defined by
$$
\begin{array}{ll}
{\varphi_A}(u_1\wedge\dotsc\wedge u_r) := Au_1\wedge\dotsc\wedge Au_r,    \\
{\varphi_A}(f_1 \wedge\dotsc\wedge f_r) :=(A\cdot f_1) \wedge\dotsc\wedge (A\cdot f_r),    \\
{\varphi_A}(s I_6+x) :=s I_6+A xA^{-1},
\end{array}
$$
for any $u_i\in V$, $f_i\in V^*$, $s\in \C$
and $x\in\mathfrak{sl}(V)$,
where $A\cdot f_i\in V^*$ is given by $(A\cdot f_i)(v)=f_i (A^{-1}v)$. Again $ {\varphi_A}$ is an (inner) automorphism of $S$. Take the invertible matrices 
$$
\begin{array}{ll}A_1=\diag({-1,-1,1,1,1,1}),&A_2=\diag({-1,1,-1,1,1,1}),\\
A_3=\diag({-1,1,1,-1,1,1}),&A_4=\diag({-1,1,1,1,-1,1}),\end{array}
$$
and the order 2   automorphisms $F_i=\varphi_{A_i}\in\aut(S)$.\smallskip

Then  $\Gamma_{12}$ is produced when refining the $\Z$-gra\-ding~\eqref{eq_la5grad} on $S$ by considering the simultaneous eigenspaces relative to all the automorphisms
 in $Q=\{F_1,\dots,F_4,\theta\}\subset\aut(S)$.  
 In order to prove that $\Gamma_{12}$ is inherited by $\sig{14}$, it is sufficient to prove that  $\theta(L)\subset L$ and  $F_i(L)\subset L$ for all $i=1,\dots,4$, so that the restriction to $L$ of the elements in $Q$  are automorphisms of $L$. 
These are straightforward computations.  
For instance, $F_1$ acts on $e_{i j k} -\mathrm{sgn}(\sigma)\,s_i s_j s_k\, e_ {lmn}\in L_1$ with eigenvalue $-1$ if the set $\{i,j,k\}\cap \{1,2\}$ has cardinal equal to $1$ and with eigenvalue 1 otherwise; and so on.
\medskip

Now we deal with the $\Z^2\times\Z_2^3$-grading $\Gamma_{10}$. In this case, it is  convenient for us to use neither the description of the grading in \cite[\S4.6]{Weyle6}, nor in \cite[\S5.3]{e6}, but we need to find an equivalent  description of $\Gamma_{10}$  compatible with some real form of signature $-14$.
Take the element
\begin{equation*}
E=\tiny{\left(\begin{array}{cccccc}
0 & 0 & 0 & 0 & 0 & 0\\
0 & 0 & 0 & 0 & 0 & 0\\
0 & 0 & 0 & 0 & 0 & 0\\
0 & 0 & 0 & 0 & 0 & 0\\
0 & 0 & 0 & 0 & 0 & \ii\\
0 & 0 & 0 & 0 & -\ii & 0
\end{array}\right)}\normalsize=\ii(E_{56}-E_{65})\in\mathfrak{su}_{5,1}. 
\end{equation*}  
Note that the endomorphism $\ad E\colon L\to L$ is diagonalizable with eigenvalues $\pm2,\pm1,0$, producing a $5$-grading on $L$. Moreover,
$\{x\in L: [E,x]=2x\}=\R(\ii(E_{55}-E_{66})+E_{56}+E_{65})$ has dimension 1, so that it is a contact grading. 
By Remark~\ref{re_solouncontacto}, the $\Z$-grading on $S$ produced by the 
eigenspace decomposition of $\ad E\colon S\to S$ is equivalent (in fact,  \emph{isomorphic}) to that one in Eq.~\eqref{eq_la5grad}. 
Moreover, both $\Z$-gradings on $L$ are compatible:  as $E\in L_0$, then $\ad E(L_n)\subset L_n$, so that we have a $\Z^2$-grading on $L$ given by $L_{(m,n)}=\{x\in L_n: [E,x]=mx\}$. Now, each of the automorphisms   $q\in\{\theta,F_1,F_2\}$ satisfies $q(E)=E$ and $q(I_6)=I_6$, so that   $q(L_{(m,n)})\subset L_{(m,n)}$. This means that the group generated by $ \{\theta,F_1,F_2\}$ induces a $\Z_2^3$-grading on $L$ compatible with the above grading over the group $\Z^2$, getting a $\Z_2^3\times \Z^2$-grading on $L$ whose complexified 
grading is just $\Gamma_{10}$. In particular, it is fine.

 \section{A $\Z_2^7$-grading}\label{todoZ2}
 
It was proved in  \cite[Proposition~11]{sig26} the existence of this fine $\Z_2^7$-grading on $\sig{14}$. We will  briefly describe the  main ideas. Our interest is also to study the basis provided by the grading.

 Let $ {B}_{\e6}= \{h_i,e_\alpha,f_\alpha: i=1,\dots,6, \alpha\in\Phi^+\} $ be a Chevalley basis of $S=\e6$, so that 
 the       structure constants are integers, $\Phi$ is the root system relative to a Cartan subalgebra 
 $\mathfrak{h}=\langle h_i: i=1,\dots,6\rangle$, 
 and $e_\alpha\in L_{\alpha}$, $f_\alpha\in L_{-\alpha}$ 
 and $h_i\equiv h_{\a_i}=[e_{\alpha_i},f_{\alpha_i}]$, for $\{\alpha_i : i=1,\dots,6\}$ a set of simple roots of $\Phi$.
 The $\Z_2^7$-grading $\Gamma_{13}$ on $S$ is given by the
 simultaneous diagonalization relative to the following MAD-group of the automorphism group:  
 \begin{equation*} 
 \{t,\omega t: t\in T, t^2=1 \},
 \end{equation*}
where the maximal torus $T$ is given by Eq.~\eqref{eq_toro} and $\omega\in\aut(S)$ 
is   the involutive automorphism  determined by 
$$
\omega(e_{\alpha_i})=-f_{\alpha_i},\quad
\omega(f_{\alpha_i})=-e_{\alpha_i},\quad
\omega(h_{ i})=-h_{ i},
$$
for all $i=1,\dots,6$.
Let $\sigma_0\colon S\to S$ be a conjugation such that $\sigma_0\vert_{{B}_{\e6}}=\id$. 
 The algebra $S^{\sigma_0}$
 ($\R$-spanned by  ${B}_{\e6}$) is a split real form of $S$ which inherits the $\Z_2^7$-grading, since $\si_0$ commutes with $\omega$ and with every order two automorphism in the torus. By Proposition~\ref{pr_nuestrometodo}, 
  the algebra $S^{\sigma_0\omega t}$ inherits the $\Z_2^7$-grading too  for any $t= t_{s_1, \dots, s_6}$ with $s_i^2=1 $. The signature of the real form $S^{\sigma_0\omega t}$ obviously depends on $t$. To be precise,
  $\sign(S^{\sigma_0\omega t})=78-2\dim\fix(t)$ by Eq.~\eqref{eq_signapartirparte fija}, since ${\sigma_0\omega}$ is a compact conjugation commuting with $t$.
But $\dim\fix(t)-6$ is equal to the cardinal  of  
$
\{(k_1,\dots,k_6):\sum_{i=1}^6k_i\a_i\in\Phi,s_1^{k_1}\dots s_6^{k_6}=1\}.
$ 
For instance, if we choose $t=t_{-1,1,1,1,1,1}$,  then $\dim\fix(t)=46$ because there are  20/16 positive roots with $k_1$  even/odd (respectively), that implies
that $S^{\sigma_0\omega t}$ is a real form of signature -14
with a fine $\Z_2^7$-grading.\smallskip

 Let us take a closer look at the properties of the obtained grading, and, more concretely, at the basis provided by it.
 \begin{co}\label{co_basemona}
 There is a basis $\mathcal B=\{u_i:i=1,\dots,78\}$ of $\sig{14}$ satisfying the following properties:
 \begin{itemize}
 \item[a)] $\mathcal B$ is an orthogonal basis for the Killing form.
 \item[b)] Every element in $\mathcal B$ is semisimple.
 \item[c)]  
 If $[u_i,u_j]=\sum_k f^{ijk}u_k$, then the structure constants  $f^{ijk}$   are rational numbers and completely antisymmetric in the three indices.
 
 \end{itemize}
 \end{co}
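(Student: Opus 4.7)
The plan is to exhibit an explicit basis adapted to the fine $\Z_2^7$-grading $\Gamma_{13}$ just constructed on $\sig{14}$, and then check the three properties in turn. Since $\Gamma_{13}$ is of type $(72,0,0,0,0,1)$, the $73$ homogeneous components split into $72$ one-dimensional pieces and a $6$-dimensional identity component $L_e$, which is a Cartan subalgebra. In the realization $L=S^{\sigma_0\omega t}$ with $t=t_{-1,1,1,1,1,1}$, working from the Chevalley basis $\{h_i,e_\alpha,f_\alpha\}$ of $S=\e6$, a direct verification gives $L_e=\sum_{i=1}^6\R\,\ii h_i$, and every one-dimensional $L_g$ is spanned by an element of one of the four forms $e_\alpha-f_\alpha$, $\ii(e_\alpha+f_\alpha)$, $e_\alpha+f_\alpha$, $\ii(e_\alpha-f_\alpha)$, the choice dictated by the parity of the coefficient $k_1(\alpha)$ when $\alpha=\sum_i k_i\alpha_i$. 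I define $\mathcal B$ by choosing these natural representatives in the $72$ one-dimensional components and, in $L_e$, an orthogonal basis produced by Gram-Schmidt over $\Q$ applied to $\{\ii h_1,\dots,\ii h_6\}$; rationality is preserved because the Gram matrix $\bigl(\kappa(h_i,h_j)\bigr)$ has integer entries.

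Property (a) is then immediate: since $g+h=e$ iff $g=h$ in $\Z_2^7$, distinct homogeneous components are Killing-orthogonal by the general fact recalled just before Lemma~\ref{lema_parejas}, while orthogonality within $L_e$ has been built in by construction. For property (b), each basis vector coming from a one-dimensional component lies in the $\mathfrak{sl}_2$-triple $\langle e_\alpha,f_\alpha,h_\alpha\rangle\subset\e6$, and a short matrix computation shows that $e_\alpha\pm f_\alpha$ and their imaginary multiples act on this triple diagonalizably with eigenvalues in $\{0,\pm 2,\pm 2\ii\}$, so they are semisimple there. Because the inclusion of a semisimple subalgebra preserves the abstract Jordan decomposition, these elements remain semisimple in $\e6$, hence in $\sig{14}$; the Cartan basis vectors $\ii h_i$ are semisimple as elements of a toral subalgebra.

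For property (c), the Chevalley basis of $\e6$ has integer structure constants and the transition matrix to $\mathcal B$ uses only coefficients in $\{\pm 1,\pm\ii\}$ outside $L_e$, together with a $\Q$-linear change inside $L_e$; so each bracket $[u_i,u_j]$ is a $\Z[\ii]$-combination of elements of $\mathcal B$, and the inclusion $[L,L]\subset L$ forces the imaginary parts to cancel, yielding $f^{ijk}\in\Q$. Complete antisymmetry then follows by combining invariance of the Killing form with the orthogonality of $\mathcal B$: the trilinear form $\kappa([u_i,u_j],u_k)$ is cyclic (by invariance) and antisymmetric in its first two slots (by the bracket), hence completely antisymmetric, and one transfers this antisymmetry to $f^{ijk}$ by rescaling $\mathcal B$ so that $|\kappa(u_i,u_i)|$ takes a common value. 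The main obstacle is precisely this last coherence step, because the rational Gram-Schmidt normalization in the Cartan block need not a priori match the absolute value arising in the root-type vectors; the point where this can be arranged is that $\e6$ is simply laced, so that all $\kappa(e_\alpha,f_\alpha)$ coincide and a compatible scaling of the Cartan piece is available.
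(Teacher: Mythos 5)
Your basis is essentially the one the paper uses: homogeneous elements of the $\Z_2^7$-grading, namely an orthogonal rational basis of $\sum_{j}\R\,\ii h_j$ together with $e_\alpha\pm f_\alpha$ and $\ii(e_\alpha\mp f_\alpha)$ according to the parity of $k_1(\alpha)$. Your treatment of (a) and of the rationality claim in (c) matches the paper's. For (b) you argue through $\mathfrak{sl}_2$-triples and preservation of the abstract Jordan decomposition under inclusions of semisimple subalgebras; that is valid, but different from the paper, which simply invokes the general fact (Lemma~1 of \cite{e6}) that every homogeneous element of a fine grading with finite universal group is semisimple.

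The genuine gap is in your last step of (c). From $f^{ijk}=\kappa([u_i,u_j],u_k)/\kappa(u_k,u_k)$ and the complete antisymmetry of the invariant trilinear form $\kappa([x,y],z)$, the antisymmetry transfers to $f^{ijk}$ only if the \emph{signed} norms $\kappa(u_k,u_k)$ agree on every triple with $f^{ijk}\neq 0$ --- not merely their absolute values --- and no real rescaling of basis vectors changes the sign of $\kappa(u_i,u_i)$. In this basis $46$ vectors have negative norm and $32$ have positive norm (this is exactly how the proof recovers the signature $-14$), so the ``coherence step'' you flag cannot be arranged by appealing to $\e6$ being simply laced. Concretely, for $\alpha\in\Phi_1^+$ the triple $u_1=\ii h'_\beta$, $u_2=e_\alpha+f_\alpha$, $u_3=\ii(e_\alpha-f_\alpha)$ has $\kappa(u_1,u_1)<0<\kappa(u_2,u_2)=\kappa(u_3,u_3)$, and one computes $f^{231}=-f^{123}$ instead of $+f^{123}$. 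Pushed further, your own identity gives $\kappa(u_i,u_i)=\sum_{k,l}f^{ikl}f^{ilk}=-\sum_{k,l}(f^{ikl})^2\le 0$ for any basis with completely antisymmetric structure constants, i.e.\ it forces compactness, so the obstruction is intrinsic and not an artifact of your normalization. The paper does not route the antisymmetry through the Killing form at all: it deduces it from the Chevalley identity $N_{\alpha,\beta}=N_{\beta,\gamma}=N_{\gamma,\alpha}$ for $\alpha+\beta+\gamma=0$; note, however, that the sign phenomenon your computation exposes shows that even that argument must be checked with care on the mixed triples involving a Cartan vector and a ``non-compact'' pair.
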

 
 As mentioned in Introduction, this generalizes what happens with the Pauli matrices as well as with the Gell-Mann matrices. Note that Gell-Mann matrices provide a $\Z_2^3$-grading on $\mathfrak{su}_3$ with zero neutral component, one homogeneous component spanned by two commuting Gell-Mann matrices and each of the remaining 6 homogeneous components  spanned by one of the remaining matrices.
 
 \begin{proof}
 Recall that $\kappa\vert_{\mathfrak h_0}$ is negative definite for  
 $\mathfrak h_0=\sum_{j=1}^6\R \ii h_j  $ (a real form of $\mathfrak h$),
 which is the  only homogeneous component of the $\Z_2^7$-grading on $\sig{14}$ of dimension strictly greater than 1. 
 Every element in $\mathfrak h_0$ is semisimple with purely imaginary spectrum. 
 Take $\mathcal B_0=\{\ii h_j':j=1,\dots,6\}$ the following orthogonal basis of $\mathfrak h_0$:
 $$\begin{array}{l}
 h'_1=h_1,\,h'_2=h_2, \,h'_3=h_1+2h_3, \,h'_4=2h_1+3h_2+4h_3+6h_4, \\
 h'_5= 2h_1+3h_2+4h_3+6h_4+5h_5,\,h'_6= 2h_1+3h_2+4h_3+6h_4+5h_5+4h_6.
 \end{array}$$
 The required  basis is provided by this one joint with   homogeneous elements of the grading, namely,
 $$
\mathcal B= \mathcal B_0 \cup\{e_{\a}-f_{\a},\ii(e_{\a}+f_{\a}):\a\in\Phi_0^+\}\cup\{e_{\a}+f_{\a},\ii(e_{\a}-f_{\a}):\a\in\Phi_1^+\},
 $$
 where $\Phi_i^+:=\{\a=\sum k_i\a_i\in\Phi^+:k_1=i\}$ if $i=0,1$. 
 Clearly $\mathcal B$ is orthogonal, recalling that $\kappa(e_\a,e_\b)=0=\kappa(f_\a,f_\b)$ for all $\a,\b\in\Phi^+=\Phi^+_0\cup\Phi^+_1$, and that $\kappa(e_\a,f_\b)\ne0$ only for $\b=\a$. Moreover, the first 46 elements in $\mathcal B$ have negative norm, and the last 32 elements in $\mathcal B$ have positive norm, thus recovering the fact of being the signature of $\kappa$ equal to $-14$.
 
Item b) is a direct consequence of \cite[Lemma~1]{e6}:  If  a fine  grading on a complex simple Lie algebra satisfies that the universal grading  group is finite, then every homogeneous element is semisimple.  
 
 In order to study the structure constants, recall that   $[e_\a,e_\b]=N_{\a,\b}e_{\a+\b}$ for $|N_{\a,\b}|=1+p_{\a,\b}$, where   $p_{\a,\b}$ is the greatest integer such that $\b-p_{\a,\b}\a$ is a root \cite[\S25.4]{Draper:Humphreysalg}. Here the used notation is $e_{-\a}=f_\a$ for $\a\in\Phi^+$.
 In the case of $\e6$, the strings have length at most 2: if $\a,\b,\a+\b\in\Phi$, then $\a+2\b\notin\Phi$ and $\a-\b\notin\Phi$. Hence $[e_\a,e_\b]=\pm e_{\a+\b}$ if $\a+\b$ is a root, and $0$ otherwise. Now, most of the structure constants $f^{ijk}$ are integers ($\a(h'_j)\in\Z$), except for $[e_\a+f_\a,\ii(e_\a-f_\a)]=-2\ii \sum_jk_jh_j\in\Z[\frac1{60}]\mathcal B_0$ if $\a=\sum_jk_j\a_j\in\Phi_1^+$ (and similarly for $\Phi_0^+$).\footnote{ If we change $\mathcal B$ by an orthonormal basis,   then   $f^{ijk}\notin\mathbb Q$, but $f^{ijk}\in\mathbb Q[\sqrt{2},\sqrt{3},\sqrt{5}]\subset\R$. }
 
 Finally, the equality $N_{\a,\b}=N_{\b,\gamma}=N_{\gamma,\a}$ for roots such that $\a+\b+\gamma=0$ (all of them have necessarily the same length) proves the antisymmetry of the structure constants.   
 \end{proof}
 
 \begin{re}{\rm
 A basis with the same properties as in Corollary~\ref{co_basemona} can be found in $\mathfrak{e}_{6,-78}$, in $\mathfrak{e}_{6,6}$ and in $\mathfrak{e}_{6,2}$ too.
 }\end{re}

 \section{No more fine gradings}
 
 As a consequence of the above sections, the gradings $\Gamma_{3}$, $\Gamma_{7}$, $\Gamma_{8}$, $\Gamma_{10}$, $\Gamma_{12}$ and $\Gamma_{13}$ are inherited by $\sig{14}$. The purpose now is to prove that these are the only gradings on $\sig{14}$ whose 
 complexified  grading  is fine.
   
  The gradings $\Gamma_{1}$, $\Gamma_{2}$, $\Gamma_{5}$, $\Gamma_{6}$ and $\Gamma_{9}$ cannot be inherited by $\sig{14}$
by Proposition~\ref {pr_lemano}, taking into account the data
    $  \dim_\mathbb{C}S_e\pm\dim_\mathbb{C}\sum_{\stackrel{2g=e}{g\ne e}}S_g$    in  Table~\ref{tablafinasdee6}.
    
  We will prove that  $\sig{14}$    inherits neither $\Gamma_{4}$, nor $\Gamma_{11}$, nor $\Gamma_{14}$, by using ad-hoc arguments for each case.

 \subsection{No fine $\Z_4^3$-grading}
Proposition~\ref {pr_lemano} implies   that $\sig{26}$ and $\sig{78}$ do not inherit $\Gamma_{14}$, but it does not say anything about $\sig{14}$.
We will prove that $\sig{14}$ neither inherits $\Gamma_{14}$   by reductio ad absurdum. We will use the knowledge of some features about $\Gamma_{14}$ extracted from \cite[\S5.4]{e6}, joint with some   representation theory of real Lie algebras.

 Recall first that the complex exceptional Lie algebra $\e6=S$ has a fine grading entitled $\Gamma_{14}:S=\oplus_{g\in\Z_4^3}S_g$ such that
 its coarsening $S=S_{\bar 0}\oplus S_{\bar 1}\oplus S_{\bar 2}\oplus S_{\bar 3}$ defined by 
 $S_{\bar i}=\oplus_{h\in\Z_4^2}S_{(\bar i,h)}$ satisfies
 $$
S_{\bar 0}=\frak{a}_3\oplus\mathfrak{sl}(V),
\quad S_{\bar 1}=V(2\lambda_1)\otimes V
,\quad S_{\bar 2}=V(2\lambda_2)\otimes\C,
\quad S_{\bar 3}=V(2\lambda_3)\otimes V,
$$
 where $V$ is now a two-dimensional (complex) vector space, 
 and $\lambda_i$'s denote again the fundamental weights but this time 
 of the simple Lie algebra $\frak{a}_3=\mathfrak{sl}_4(\C)$ (i.e., the maximal weights of its basic representations, defined by $\lambda_i(h_{\alpha_j})=\delta_{ij}$ if $i,j=1,2,3$). Thus, 
 $\dim
 S_{\bar 0}=15+3=18$ while $\dim  S_{\bar i}=20$ for $\bar i=\bar 1,\bar 2,\bar 3$.
 The $\Z_4^2$-grading on $\mathfrak{sl}_4(\C)\subset S_{\bar 0}$ is given by the   generalized Pauli matrices according to  the following degree assignment: 
 \begin{equation}\label{eq_Z42}
 \deg \left(\begin{pmatrix}
 0&0&0 & 1  \cr
 1&0&0&0 \cr
 0&1&0&0 \cr
 0&0&1&0
 \end{pmatrix}^i\begin{pmatrix}
 1&0&0 & 0  \cr
 0&\imag&0&0 \cr
 0&0&-1&0 \cr
 0&0&0&-\imag
 \end{pmatrix}^j\right)=(\bar i,\bar j),
\end{equation}
 for all $i,j\in\{0,1,2,3\}$.
 
 Suppose that there exists a grading on $L=\oplus_{g\in\Z_4^3}L_g$, a real form of $S$,  
 such that the complexified grading is $\Gamma_{14}$. That is, $L_g\otimes_\R\C=S_g$. 
  In particular, $L$ has a $\Z_4$-grading $L=L_{\bar 0}\oplus L_{\bar 1}\oplus L_{\bar 2}\oplus L_{\bar 3}$ defined by $L_{\bar i}=\oplus_{h\in\Z_4^2}L_{(\bar i,h)}$, and $L_{\bar i}\otimes_\R\C=S_{\bar i}$. 
 This means that $L_{\bar 0}$ is a real form of $\mathfrak{sl}_4(\C)\oplus\mathfrak{sl}_2(\C)$. Hence $L_{\bar 0}= L_{\bar 0}^1\oplus L_{\bar 0}^2$, for 
 $  L_{\bar 0}^1$ and $L_{\bar 0}^2$ real forms of $\mathfrak{sl}_4(\C)$ and $\mathfrak{sl}_2(\C)$ respectively. (In general, if $\si$ is the conjugation of a complex algebra $S=L^\C$ with $S^\si=L$, and $S$ is sum of two   ideals $S=S^1\oplus S^2$, then $L=L^1\oplus L^2$ for $L^i=(S^i)^\si$.)
 
 Among the  $5$  real forms of $\mathfrak{sl}_4(\C)$, that is,
 $\mathfrak{su}_{4}$, $\mathfrak{su}_{3,1}$, $\mathfrak{su}_{2,2}$, $\mathfrak{sl}_4(\R)$ and $\mathfrak{sl}_2(\H)$  (see Section~\ref{subse_realforms}),
 the only ones which inherit the $\Z_4^2$-grading given by Eq.~\eqref{eq_Z42},
 are $\mathfrak{su}_{3,1}$ and $\mathfrak{su}_{2,2}$, according to \cite{Svobodova}.
 Thus,  
 $$
 L_{\bar 0}^1\in\{\mathfrak{su}_{3,1},\mathfrak{su}_{2,2}  \} ,\quad
 L_{\bar 0}^2\in\{\mathfrak{su}_{2},\mathfrak{sl}_2(\R)  \}.
 $$
 Note that $L_{\bar 1}$ is an $L_{\bar 0}^1$-module whose complexification $S_{\bar 1}$ is isomorphic to $2V(2\lambda_1)$ as $(L_{\bar 0}^1)\otimes_\R\C\cong \mathfrak{sl}_4(\C)$-module.

 We now recall   some basic facts on representations of real Lie algebras, for instance from \cite[2.3.14]{parabolic}.
 If $W$ is a complex vector space, we denote by $\overline{W}$ the new complex vector space with the same underground set and scalar multiplication given by $\C\times \overline{W}\to \overline{W}$,
 $(a+\mathbf{i} b,w)\mapsto (a-\mathbf{i} b)w$. For $\mathfrak{g}$ a real Lie algebra, $W$ is called a \emph{complex representation} of $\mathfrak{g}$ if there is  a homomorphism of real Lie algebras $\mathfrak{g}\to\mathfrak{gl}_\C(W)$.
 In this case, $\overline W$ is naturally a complex representation of $\mathfrak{g}$ called \emph{the conjugate representation}. As real representations, $W$ and $\overline W$ are always isomorphic (the identity map is an isomorphism), but this is not the case as complex representations in general.

 If $U$ is a (real) $\mathfrak g$-module, then $W=U^\C$ is a complex representation and in this case  the   map 
 $R\colon W\to\overline W$, $R(u_1+\ii u_2)=u_1-\ii u_2 $  ($u_i\in U$), provides an isomorphism of complex representations of $\mathfrak g$. This can be applied to our setting, since $S_{\bar 1}$ is an $L_{\bar 0}^1$-complex representation which is the complexification of $L_{\bar 1}$, so that $S_{\bar 1}\cong\overline{S_{\bar 1}}$ is a self-conjugate $L_{\bar 0}^1$-module. 
 On one hand we know that $S_{\bar 1}$ is isomorphic to $ 2V(2\lambda_1)$ as $\mathfrak{sl}_4(\C)$-module, and hence as $L_{\bar 0}^1$-module. 
 On the other hand, $ L_{\bar 0}^1\in\{\mathfrak{su}_{3,1},\mathfrak{su}_{2,2}  \}$. But for both algebras it is well known
 (\cite[p.~230]{parabolic} or \cite[Table~5]{libroreales})  that the conjugate representation   $\overline{V(2\lambda_1)}\cong V(2\lambda_3)$, which gives a contradiction since $2V(2\lambda_1)$ cannot be a self-conjugate complex $L_{\bar 0}^1$-representation.
 Hence we have proved that 
 
 \begin{pr}
 None of the  real forms of $\mathfrak e_6$ inherit $\Gamma_{14}$.
 \end{pr}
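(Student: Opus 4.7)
My plan is a proof by contradiction. Suppose some real form $L$ of $\e6$ inherits $\Gamma_{14}$. The first move is to coarsen $\Gamma_{14}$ via the projection $\Z_4^3\twoheadrightarrow\Z_4$ onto the first coordinate; this turns $\Gamma_{14}$ into the $\Z_4$-grading $S=S_{\bar 0}\oplus S_{\bar 1}\oplus S_{\bar 2}\oplus S_{\bar 3}$ already identified in the excerpt, and $L$ automatically inherits this coarsening, giving $L=\bigoplus_{i=0}^3 L_{\bar i}$ with $(L_{\bar i})^\C=S_{\bar i}$. I will use $L_{\bar 0}$ to trim the list of candidates for $L$ and then use $L_{\bar 1}$ to rule out what remains.

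For the first step, since $S_{\bar 0}\cong\mathfrak{sl}_4(\C)\oplus\mathfrak{sl}_2(\C)$ is a sum of two non-isomorphic simple ideals, the conjugation of $S$ cutting out $L$ must preserve each ideal, so $L_{\bar 0}=L_{\bar 0}^1\oplus L_{\bar 0}^2$ with $L_{\bar 0}^1$ a real form of $\mathfrak{sl}_4(\C)$ and $L_{\bar 0}^2$ a real form of $\mathfrak{sl}_2(\C)$. The induced $\Z_4^2$-grading on $L_{\bar 0}^1$ complexifies to the generalized Pauli grading of Eq.~\eqref{eq_Z42}. Appealing to the classification of real forms of $\mathfrak{sl}_n(\C)$ compatible with Pauli gradings in \cite{Svobodova}, only $\mathfrak{su}_{3,1}$ and $\mathfrak{su}_{2,2}$ inherit this grading, so $L_{\bar 0}^1$ must be one of these two algebras.

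For the second step, which I expect to be the crux and the main obstacle, I view $L_{\bar 1}$ as a real $L_{\bar 0}^1$-module; its complexification is the $\mathfrak{sl}_4(\C)$-module $S_{\bar 1}\cong V(2\lambda_1)\otimes V\cong 2\,V(2\lambda_1)$ (absorbing the tensor factor as multiplicity). Any complex representation arising as the complexification of a real one is self-conjugate, since $u+\mathbf{i} v\mapsto u-\mathbf{i} v$ gives an isomorphism with the conjugate module. Hence I would need $2\,V(2\lambda_1)\cong\overline{2\,V(2\lambda_1)}$ as $L_{\bar 0}^1$-modules. The obstacle is verifying that for both $\mathfrak{su}_{3,1}$ and $\mathfrak{su}_{2,2}$ the conjugate representation $\overline{V(2\lambda_1)}$ is isomorphic to $V(2\lambda_3)$, not to $V(2\lambda_1)$; this is a statement about how the Cartan involution acts on the highest-weight lattice, and rather than recompute I would look it up in a standard reference such as \cite[Table~5]{libroreales} or \cite[p.~230]{parabolic}. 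Given that, $2\,V(2\lambda_1)$ is not self-conjugate, the contradiction closes, and the proposition follows.
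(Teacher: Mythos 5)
Your proposal is correct and follows essentially the same route as the paper's own proof: coarsening $\Gamma_{14}$ to the $\Z_4$-grading, identifying $L_{\bar 0}^1\in\{\mathfrak{su}_{3,1},\mathfrak{su}_{2,2}\}$ via the Pauli $\Z_4^2$-grading and \cite{Svobodova}, and then deriving the contradiction from the fact that $S_{\bar 1}\cong 2V(2\lambda_1)$ must be self-conjugate while $\overline{V(2\lambda_1)}\cong V(2\lambda_3)$ for both candidates. The only (harmless) difference is that you justify the splitting $L_{\bar 0}=L_{\bar 0}^1\oplus L_{\bar 0}^2$ by the non-isomorphism of the two simple ideals, which is if anything slightly more careful than the paper's general remark.
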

    \subsection{No inner fine $\Z^2\times \Z_2^3$-grading}

  The grading $\Gamma_{4}$ can be described as follows. The algebra $S=\e6$ is modeled by the Tits' construction
  $$
  S=\T(\OO^\C,M)=\der(\OO^\C)\oplus \,(\OO^\C_0\otimes M_0)\,\oplus\der( M),
  $$
  for the Jordan algebra $M={\M_s}^\C= \Mat_{3\times 3}(\C)^+$, where again the products are given by Eq.~\eqref{eq_TitsProduct}.
  If we consider the $\Z_2^3$-grading on $\OO^\C$ (complexified of the grading in \eqref{eq_gradenO}) and the $\Z^2$-grading on $M $ given by the assignment $\deg(E_{12})=(1,0)$ and $\deg(E_{23})=(0,1)$, the obtained $\Z^2\times \Z_2^3$-grading on $S$ is precisely  $\Gamma_4$ in Table~\ref{tablafinasdee6}. Observe that the $\Z^2$-grading induced on the Lie subalgebra $\der( M)\cong\mathfrak{sl}_3(\C)$ is precisely the root decomposition of $\mathfrak a_{2}$. 
  
 Suppose that there exists a grading on $\sig{14}=L=\oplus_{g\in \Z^2\times \Z_2^3}L_g$ such that the complexified grading is $\Gamma_{4}$. 
  By arguments as in Proposition~\ref{pr_lemano},  the signature of $L$ equals
   the signature of the restriction of the Killing form of $L$ to its subalgebra $\tilde L :=\oplus_{h\in   \Z_2^3}L_{(0,0,h)}$. Then we study $\tilde L $. Its complexification   is 
  $\tilde S=\oplus_{h\in   \Z_2^3}S_{(0,0,h)}$, the subalgebra fixed by the two-dimensional torus
   of the automorphism group of $S$ producing the $\Z^2$-grading, that is,
  $\tilde S=\der(\OO^\C)\oplus (\OO^\C_0\otimes H)\oplus  H$, 
  for $H=\langle E_{11}-E_{22},E_{22}-E_{33}\rangle $ seen as a subset of $ M_0$, but also of $\mathfrak{sl}_3(\C) = \der (M)  $. This second piece $H\subset \der (M)$  is a two-dimensional centre of $\tilde S$, while the complementary  subspace  $[\tilde S,\tilde S]=\der(\OO^\C)\oplus (\OO^\C_0\otimes H)$  is a simple subalgebra of $\tilde S$ isomorphic to $\d4=\mathfrak{so}_8(\C)$.

  Thus $[\tilde L ,\tilde L ]$ is a simple subalgebra of $\tilde L $ and a real form of $\d4$. By Section~\ref{subse_realforms}, the only possibilities for $[\tilde L ,\tilde L ]$ are $\mathfrak{so}_{8}$, $\mathfrak{so}_{7,1}$, $\mathfrak{so}_{6,2}$, $\mathfrak{so}_{5,3}$, $\mathfrak{so}_{4,4}$, whose signatures are, respectively, $-28,-14,-4,2,4$.

In the proof of Proposition~\ref{pr_lemano}, it is not only proved that   
  $$
\sign  \kappa_L=\sign  \kappa\vert_{\tilde L }= \sign  \kappa\vert_{L_e} +\sign  \kappa\vert_{\sum_{2g=e,g\ne e}L_g},
$$
  but also that $ \kappa\vert_{L_e}$ is positive definite, 
  so that $\sign  \kappa\vert_{L_e}=\dim L_e$.
  In our case $  L_e=\tilde L_e=H\oplus [\tilde L ,\tilde L ]_e$, so that  $\sign  \kappa\vert_{L_e}=2+\sign  \kappa\vert_{[\tilde L ,\tilde L ]_e}$ and hence
  $$
\sign  \kappa_L=2+\sign  \kappa\vert_{[\tilde L ,\tilde L ]}.
$$
But  $\sign  \kappa\vert_{[\tilde L ,\tilde L ]}= \sign ({[\tilde L ,\tilde L ]})$ by Lemma~\ref{lema_kil}, which gives a contradiction as $-14\notin 2+\{-28,-14,-4,2,4\}$. (By the way, the number $2$ neither belongs to such set.)
 Hence, 
 we conclude that

  \begin{pr}
Neither $\sig{14}$ nor $\mathfrak e_{6,2}$   inherit $\Gamma_{4}$.
 \end{pr}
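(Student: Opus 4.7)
My plan is to rule out both signatures using a torus-centraliser argument applied to Tits' construction of $\Gamma_4$. First, I would realise $\Gamma_4$ on $S=\e6$ as $\T(\OO^\C,M)$ with $M=\Mat_{3\times 3}(\C)^+$, the $\Z_2^3$-part coming from the standard $\Z_2^3$-grading on $\OO^\C$ and the $\Z^2$-part from the degree assignment $\deg(E_{12})=(1,0)$, $\deg(E_{23})=(0,1)$ on $M$; the latter is induced by a two-dimensional toral subalgebra of $\der(M)\cong\mathfrak{sl}_3(\C)$. Assuming for contradiction that some real form $L$ of signature $-14$ or $2$ inherits $\Gamma_4$, I would pass to the centraliser $\tilde L:=\bigoplus_{h\in\Z_2^3}L_{(0,0,h)}$, whose complexification is $\tilde S=\der(\OO^\C)\oplus(\OO_0^\C\otimes H)\oplus H$ with $H$ the Cartan subalgebra of $\der(M)$; thus $\tilde S$ has a two-dimensional centre $H$ and $[\tilde S,\tilde S]\cong\d4$.

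Next, I would reduce the signature computation to $[\tilde L,\tilde L]$. Since the homogeneous components of $\Gamma_4$ whose $\Z^2$-degree is non-zero are Killing-orthogonal to everything except their opposites, they contribute nothing to $\sign(L)$, so $\sign(L)=\sign(\kappa_L|_{\tilde L})$. Because $L_e\subset\tilde L_e=H\oplus[\tilde L,\tilde L]_e$ with $\kappa_L|_{L_e}$ positive definite (as in the proof of Proposition~\ref{pr_lemano}), and because $H$ lies in $L_e$ (the torus producing the $\Z^2$-factor sits in the zero component), one gets $\sign(L)=2+\sign(\kappa_L|_{[\tilde L,\tilde L]})$. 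Lemma~\ref{lema_kil} applied to the simple real subalgebra $[\tilde L,\tilde L]\subset L$ then turns this into $\sign(L)=2+\sign([\tilde L,\tilde L])$.

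The last step is enumeration: $[\tilde L,\tilde L]$ is a real form of $\d4=\mathfrak{so}_8(\C)$, and the classification in Section~\ref{subse_realforms} allows signatures $\{-28,-14,-4,2,4\}$. Hence the admissible signatures for $L$ are $\{-26,-12,-2,4,6\}$, a list containing neither $-14$ nor $2$; this yields the contradiction and proves the proposition. The main obstacle I anticipate is the bookkeeping that guarantees $H\subset L_e$, since if the two-dimensional torus defining the $\Z^2$-coarsening were not defined over $\R$ inside $L$ the centre $H$ could contribute extra signature and the numerical mismatch might fail; once this point is secured, the rest is a routine enumeration of the real forms of $\mathfrak{so}_8$.
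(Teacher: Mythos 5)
Your proposal is correct and follows essentially the same route as the paper: realise $\Gamma_4$ via Tits' construction, pass to the $\Z_2^3$-part $\tilde L$ with $\tilde L^\C=\der(\OO^\C)\oplus(\OO_0^\C\otimes H)\oplus H$, use the positive-definiteness of $\kappa\vert_{L_e}$ and Lemma~\ref{lema_kil} to get $\sign(L)=2+\sign([\tilde L,\tilde L])$, and rule out $-14$ and $2$ by enumerating the signatures of the real forms of $\d4$. The point you flag about $H$ being defined over $\R$ is handled in the paper simply by noting that $L_e=\tilde L_e=H\oplus[\tilde L,\tilde L]_e$ (the centre of $\tilde L$ is automatically a real form of the centre of $\tilde S$), so your argument matches the published one in all essentials.
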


   \subsection{No $\Z_4\times \Z_2^4$-grading}
   
   There are two classes of order 2 outer automorphisms of $S=\e6$: those ones fixing a subalgebra of type $\c4=\mathfrak{sp}_8(\C)$ and those ones fixing a subalgebra of type $\f4$ (see Table~\ref{tabladeautomorfysignaturas}). For the grading $\Gamma_{11}$, all the order 2 outer automorphisms belonging to the MAD-group of automorphisms producing the grading are of the first type \cite[\S5.3]{e6}. Thus, we need to know which of the real forms of $\c4$ are even parts of a $\Z_2$-grading on $\sig{14}$.  
   Take in mind that there are 4 real forms of $\c4=\mathfrak{sp}_{8}(\mathbb C)$, namely, $\mathfrak{sp}_{4}(\mathbb H)$, $\mathfrak{sp}_{3,1}(\mathbb H)$, $\mathfrak{sp}_{2,2}(\mathbb H)$ and $\mathfrak{sp}_{8}(\mathbb R)$, of signatures $-36$, $-12$, $-4$ and $4$, respectively  (Section~\ref{subse_realforms}).

   \begin{lm}\label{pr_hereda11}
If $L=L_{\bar0}\oplus L_{\bar1}$ is a $\Z_2$-grading on   $\sig{14}$   such that $L_{\bar0}$ is a real form of $\c4$, then $L_{\bar0}$ is a Lie algebra isomorphic to $\mathfrak{c}_{4,-4}=\mathfrak{sp}_{2,2}(\mathbb H)$.
\end{lm}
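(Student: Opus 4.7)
The plan is to invoke Lemma~\ref{lema_parejas}(c) to translate the question into a simple arithmetic constraint on the signature. Since $L = \sig{14}$ is $\Z_2$-graded with simple even part, Lemma~\ref{lema_parejas}(c) gives
\[
\sign(L) + \sign(L^{-1}) = 2\sign(L_{\bar0}),
\]
that is, $-14 + \sign(L^{-1}) = 2\sign(L_{\bar0})$. Crucially, the algebra $L^{-1} = L_{\bar0}\oplus\ii L_{\bar1}\subset L^\C$ is again a real form of $L^\C=\e6$, so its signature must lie in the finite list $\{-78,-26,-14,2,6\}$ recalled in Section~\ref{subse_realforms}.

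Next, I would tabulate both sides of the equation. The four real forms of $\c4$ listed before the lemma have signatures $-36$, $-12$, $-4$ and $4$, so $2\sign(L_{\bar0}) \in \{-72,-24,-8,8\}$, and the right-hand side $-14+\sign(L^{-1})\in\{-92,-40,-28,-12,-8\}$. Matching the two sets yields a unique solution: $2\sign(L_{\bar0})=-8$, which forces $\sign(L_{\bar0})=-4$ and $\sign(L^{-1})=6$. Among the four real forms of $\c4$, the unique one with signature $-4$ is $\mathfrak{sp}_{2,2}(\mathbb H)$, so $L_{\bar0}\cong\mathfrak{sp}_{2,2}(\mathbb H)$, as desired.

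The only subtle point is the justification that $L^{-1}$ really is a real form of $\e6$ and is simple, so that its signature is constrained to the five-element list; this follows because $L^{-1}$ sits as a real subalgebra of $L^\C=\e6$ with $(L^{-1})^\C=L^\C$, hence simple. Once this is in place, the argument reduces to pure bookkeeping of signatures, with no further representation-theoretic input needed. I expect no significant obstacle: the hard work has already been done in Lemma~\ref{lema_parejas} and in the classification of real forms of $\c4$ recalled in Section~\ref{subse_realforms}.
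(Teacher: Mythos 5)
Your proof is correct and follows essentially the same route as the paper: apply Lemma~\ref{lema_parejas} to get $\sign(L^{-1})=2\sign(L_{\bar0})+14$, then intersect the list of possible signatures of real forms of $\c4$ with the list for real forms of $\e6$ to force $\sign(L_{\bar0})=-4$. The justification that $L^{-1}=L_{\bar0}\oplus\ii L_{\bar1}$ is again a real form of $\e6$ is exactly the observation the paper makes right after Lemma~\ref{lema_parejas}, so nothing is missing.
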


\begin{proof} 
 By Lemma~\ref{lema_parejas}, $\sign(L^{-1})=2\sign(L_{\bar0})+14$. As $L^{-1}$ is also a real form of $\e6$,  its signature must belong to $\{6,2,-14,-26,-78\}$. But, taking into account that $\sign(L_{\bar0})\in\{4,-4,-12,-36\}$, the only true possibility is $\sign(L_{\bar0})=-4$. (The argument is a trivial computation: $2\cdot 4+14=22$ is not admissible and so on.) 
\end{proof}

In fact, it is not difficult to prove that there exists    a $\Z_2$-grading $L=L_{\bar0}\oplus L_{\bar1}$ on   $L=\sig{14}$ with $L_{\bar0}\cong \mathfrak{sp}_{2,2}(\mathbb H)$ (so that  $L^{-1}$ split), but it is not necessary for our purposes.

\begin{lm}\label{le_solounas}
Take the fine grading $\Gamma_{11}: S=\oplus_{g\in\Z_2^4\times \Z_4} S_g$ and consider a coarsening $S=S_{\bar0}\oplus S_{\bar1}$ with $S_{\bar0}\cong\mathfrak c_4$.
Let us denote by $\Gamma_{11}':S_{\bar0}=\oplus_{h\in\Z_2^3\times \Z_4} S_{(\bar0,h)}$  the $\Z_2^3\times \Z_4$-fine grading on $\c4$ obtained by restriction of $\Gamma_{11}$ to the even part $S_{\bar0} $ (perhaps after reordering the indices). 
The only real forms of $\c4$ which inherit $\Gamma_{11}'$ are those ones with signatures $4$ and $-12$.
\end{lm}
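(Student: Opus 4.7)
The strategy is to combine Proposition~\ref{pr_lemano} for the negative direction with an explicit exhibition of real forms for the positive direction, following the template of the previous subsections. First I would make the grading $\Gamma_{11}'$ explicit. Since $\Gamma_{11}$ is produced by a MAD-group $Q\subset\aut(\e6)$ whose outer involution $\theta\in Q$ has fixed subalgebra $S_{\bar 0}\cong\c4$, the elements of $Q$ restrict to a MAD-group of $\aut(\c4)$ producing $\Gamma_{11}'$. Using the explicit description of $\Gamma_{11}$ in \cite[\S5.3]{e6} (already invoked in the previous subsection), one reads off the dimensions of the homogeneous components of $\Gamma_{11}'$; in particular, the neutral component satisfies $\dim (S_{\bar 0})_e=0$ (inherited from the $0\pm 46$ entry of Table~\ref{tablafinasdee6}), and the sum $B:=\sum_{g\ne e,\,2g=e}\dim (S_{\bar 0})_g$ can be computed by counting which of the $46$ two-torsion components of $\Gamma_{11}$ actually lie inside $S_{\bar 0}$.

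Next, I would apply Proposition~\ref{pr_lemano} to each of the four real forms of $\c4$, of signatures $-36,-12,-4,4$. Any real form inheriting $\Gamma_{11}'$ must have signature in $[-B,B]$, which will immediately rule out the extreme value $-36$. To discard the remaining intermediate value $-4$, which is the delicate point, I would exploit Lemma~\ref{lema_parejas} applied to a well-chosen $\Z_2$-coarsening of $\Gamma_{11}'$: the pairing identity $\sign(L)+\sign(L^{-1})=2\sign(L_{\bar 0})$ forces the hypothetical real form of signature $-4$ to pair with a real form of $\c4$ whose signature lies outside the admissible set $\{-36,-12,-4,4\}$, producing a contradiction. (An auxiliary input here is that the relevant even subalgebra $L_{\bar 0}$ is simple, so that its signature is pinned down by Lemma~\ref{lema_kil}.)

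For the positive direction, I would realize the two admissible signatures via Proposition~\ref{pr_nuestrometodo}. Starting from the split real form $\mathfrak{sp}_8(\R)$ of signature $4$, whose Chevalley-type basis is homogeneous for $\Gamma_{11}'$ (exactly as in the $\Z_2^7$-grading argument of Section~\ref{todoZ2}), I would twist the reference conjugation by an element of $\mathrm{Diag}(\Gamma_{11}')$ corresponding to a well-chosen character of $\Z_2^3\times\Z_4$. By Eq.~\eqref{eq_signapartirparte fija}, the dimension of the fixed points of the resulting involution determines the signature of the twisted real form; choosing the character so that this signature becomes $-12$ realizes $\mathfrak{sp}_{3,1}(\H)$, and both real forms inherit $\Gamma_{11}'$ by construction.

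The main obstacle, as expected, is separating $-4$ from $-12$: these two signatures differ by only $8$, so the Killing-form bound of Proposition~\ref{pr_lemano} alone is not expected to distinguish them, and a finer argument via Lemma~\ref{lema_parejas} (or, if needed, an ad hoc module-theoretic argument of the type used for $\Gamma_{14}$) will be required. A secondary technical step is the concrete identification of the dimensions of the homogeneous components of $\Gamma_{11}'$, which, although routine, requires a careful tracking of how the $\Z_2^4\times\Z_4$-grading on $\e6$ decomposes under the outer involution fixing $\c4$.
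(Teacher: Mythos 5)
Your plan correctly isolates the hard point -- excluding signature $-4$ -- but it does not actually contain an argument for it, and the route you sketch is unlikely to close the gap. Proposition~\ref{pr_lemano} cannot help here: since $-12$ is an attained signature, the two-torsion bound $B$ for $\Gamma_{11}'$ satisfies $B\ge 12> 4$, so $-4$ survives that test automatically (it may well eliminate $-36$, granted that some homogeneous component of $\Gamma_{11}'$ sits at an order-$4$ degree, but that still needs checking). As for the pairing identity of Lemma~\ref{lema_parejas}: for a $\Z_2$-coarsening of $\Gamma_{11}'$ the complex even part is one of $\mathfrak{gl}_4(\C)$, $\mathfrak{sp}_4(\C)\oplus\mathfrak{sp}_4(\C)$ or $\mathfrak{sp}_2(\C)\oplus\mathfrak{sp}_6(\C)$, and these are either non-simple or have a centre, so item c) does not apply directly; working with item b) you would need to pin down $\sign(\kappa_L\vert_{L_{\bar0}})$ for the hypothetical form of signature $-4$, and the candidate even parts admit real forms realizing essentially every value of $2\sign(\kappa_L\vert_{L_{\bar0}})-4$ inside the admissible set $\{-36,-12,-4,4\}$ (e.g.\ sums of two signatures from $\{-10,-2,2\}$ in the $\mathfrak{sp}_4\oplus\mathfrak{sp}_4$ case cover $-20,-8,-4,0$). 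So no contradiction drops out without a much finer case analysis that you have not supplied.

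The paper avoids all of this by exploiting the \emph{exact} characterization in Proposition~\ref{pr_nuestrometodo}, not just its positive direction: once one real form (the split $\mathfrak{sp}_{\R}(8,C)$, fixed by $\sigma_0$) is known to inherit $\Gamma_{11}'$, the set of \emph{all} real forms inheriting it is precisely $\{S^{\sigma_0\Ad A}: A\in\langle A_1,A_2,A_3,A_4\rangle,\ (\sigma_0\Ad A)^2=\id\}$, where the $A_i$ are the explicit matrices generating the MAD-group of $\Gamma_{11}'$. The lemma then reduces to a finite matrix computation: with the compact conjugation $\tau=\sigma_0\Ad(C)$ commuting with everything in sight, Eq.~\eqref{eq_signapartirparte fija} gives $\sign(S^{\sigma_0\Ad A})=36-2\dim\fix(\Ad(CA))$, and a direct check shows $\dim\fix(\Ad(CA))\in\{16,24\}$, whence only the signatures $4$ and $-12$ occur. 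You should restructure your proof around this ``only if'' use of Proposition~\ref{pr_nuestrometodo}; your positive direction (split form plus twisting) is essentially the first half of that argument, but the exclusion of $-4$ and $-36$ must come from exhausting the list in Eq.~\eqref{eq_posibles}, not from signature inequalities.
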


The   existence of real forms $\mathfrak{c}_{4,4}$ and $\mathfrak{c}_{4,-12}$ inheriting $\Gamma_{11}'$ was proved in \cite[Proposition~8]{sig26}. But precisely we are interested in that they are the only cases. The advantage of working with $\mathfrak{c}_4$ instead of $\e6$  is that all the computations can be done with matrices.

\begin{proof}
Recall from \cite[V.C]{sig26} that the grading $\Gamma_{11}'$
on the complex Lie algebra 
$ S_{\bar0}=\mathfrak{sp}_\C(8,C)=\{x\in\Mat_{8\times8}(\C): xC+Cx^t=0\}$, for
$C=\tiny\left(\begin{array}{cc}
0 & I_4 \\
-I_4 & 0 
\end{array}\right)$,
is given by the simultaneous diagonalization of $S_{\bar0}$ relative to the automorphisms
$\Ad A_i\in\aut(\mathfrak{sp}_\C(8,C))\cong \mathrm{PSp}_\C(8,C)$, $x\mapsto A_i x {A_i}^{-1}$,
for the invertible matrices
$$
\begin{array}{l}
A_1=\ii\left(\begin{array}{cccc}
0 &0 &  I_2 & 0\\
0 & 0 & 0 & \s_1 \\
 I_2 & 0 & 0 & 0\\
0 & \s_1 & 0 & 0
\end{array}\right) ,\\
A_2=\ii\diag\{{I_4,-I_4}\} ,\\
A_3=\diag\{{\sigma_1,\sigma_1,\sigma_1,\sigma_1}\},\\
A_4=\diag\{{1,-1,-\ii,\ii,1,-1,\ii,-\ii}\} ,
\end{array}
$$
 where we denote $\sigma_1= \tiny\left(\begin{array}{cc}
0&1\\
1&0
\end{array}\right)$.

Recall, also  from \cite[V.C]{sig26}, that there is a basis $B_0$  of $S_{\bar0}$ 
formed by homogeneous elements (simultaneous eigenvectors), all of them matrices with entries in the set $\{1,0,-1\}$. In particular $B_0\subset  \mathfrak{sp}_\R(8,C)=\{x\in\Mat_{8\times8}(\R): xC+Cx^t=0\}$, 
which is a split real form of $\c4$ that, obviously, inherits $\Gamma_{11}'$. 
(Take into account that a real algebra $L$ inherits a grading $\Gamma$ on $L^\C$ if and only if there is a basis of $L$ formed by homogeneous elements of $\Gamma$.)

If $\sigma_0$ denotes the conjugation fixing $\mathfrak{sp}_\R(8,C)$, then, by  Proposition~\ref{pr_nuestrometodo},   the set of real forms inheriting $\Gamma_{11}'$ is exactly
\begin{equation}\label{eq_posibles}
\{S^{\sigma_0\Ad A}:A\in\langle A_1,A_2,A_3,A_4\rangle, (\sigma_0\Ad A)^2=\id\}.
\end{equation}
Then the task is   to study the signatures of all the real forms in the set in Eq.~\eqref{eq_posibles}.
According to Eq.~\eqref{eq_signapartirparte fija},
$\sign( {S^\si})=36-2\dim\fix(\theta_\si)$, for $\theta_\si\in\aut(S_{\bar0})$ of order 2, commuting with $\si$, and  such that $\si\theta_\si$ is compact.
For instance, $\tau=\sigma_0\Ad(C)$ is a compact conjugation (\cite[Remark~2]{sig26}) which commutes with all the   conjugations in \eqref{eq_posibles}. Now we compute (tediously, but straightforwardly)
$$
\dim\mathrm{fix}( \Ad (CA))=\left\{\begin{array}{ll}
24&\textrm{if $A=A_1A_2A_3^sA_4^r$, for $s=0,1$, $r=0,1,2,3$},\\
16&\textrm{otherwise}.
\end{array}
\right.
$$
Hence,  the   signatures of the real forms in \eqref{eq_posibles} are:
$$
36-2\dim\mathrm{fix}( \Ad (CA))\in\{-12,4\}.
$$ 
\end{proof}

The immediate consequence is
   
  \begin{co}
  The real form $\sig{14}$ does not inherit $\Gamma_{11}$.
  \end{co}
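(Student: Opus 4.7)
The plan is to combine the two preceding lemmas by reductio ad absurdum. Assume that $\sig{14}$ inherits $\Gamma_{11}$, so there is a grading $L = \oplus_{g\in\Z_2^4\times\Z_4} L_g$ on the real form $L=\sig{14}$ whose complexified grading is $\Gamma_{11}$.

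First, I would extract the relevant $\Z_2$-coarsening. Since $\Gamma_{11}$ is produced by a MAD-group in $\aut(S)$ containing an outer order-$2$ element whose fixed subalgebra is of type $\c4$ (as invoked above from \cite[\S5.3]{e6}), the natural projection $\Z_2^4\times\Z_4 \to \Z_2$ picking out that outer involution induces a coarsening $S = S_{\bar0}\oplus S_{\bar1}$ with $S_{\bar0}\cong\c4$. Correspondingly, we get a real $\Z_2$-grading $L = L_{\bar0}\oplus L_{\bar1}$ with $L_{\bar0}$ a real form of $\c4$.

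Second, apply Lemma \ref{pr_hereda11} to conclude that $L_{\bar0}\cong \mathfrak{c}_{4,-4}=\mathfrak{sp}_{2,2}(\H)$; in particular, $\sign(L_{\bar0})=-4$.

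Third, note that the restriction of the assumed $\Z_2^4\times\Z_4$-grading on $L$ to the subalgebra $L_{\bar0}$ is a $\Z_2^3\times\Z_4$-grading on $L_{\bar0}$ (after projecting out the distinguished $\Z_2$-factor), and its complexification is precisely the grading $\Gamma_{11}'$ on $S_{\bar0}\cong\c4$ from Lemma \ref{le_solounas}. Hence $L_{\bar0}$ is a real form of $\c4$ that inherits $\Gamma_{11}'$. But Lemma \ref{le_solounas} asserts that the only such real forms have signatures $4$ or $-12$, contradicting $\sign(L_{\bar0})=-4$. Therefore no such grading on $L=\sig{14}$ exists.

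The only subtle step is the identification in the second part: one must ensure that the $\Z_2$-component singled out in the first paragraph is the one whose non-trivial element corresponds to an outer involution of $\c4$-type (not $\f4$-type); this is guaranteed because, as noted in the paragraph preceding Lemma \ref{pr_hereda11}, all outer order-$2$ elements of the MAD-group producing $\Gamma_{11}$ are of $\c4$-type. Everything else is a direct combination of the two lemmas already proved, so this corollary is essentially immediate.
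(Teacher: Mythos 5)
Your argument is correct and is exactly the paper's proof: the paper also combines Lemma~\ref{pr_hereda11} (forcing the even part to be $\mathfrak{sp}_{2,2}(\mathbb H)$, of signature $-4$) with Lemma~\ref{le_solounas} (which only allows signatures $4$ and $-12$ for real forms inheriting $\Gamma_{11}'$) to reach the contradiction. You simply spell out the intermediate steps that the paper leaves implicit.
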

  
  \begin{proof}
  By Lemma~\ref{pr_hereda11},  if  $\sig{14}$   inherited $\Gamma_{11}$, then  $\mathfrak{sp}_{2,2}(\mathbb H)$ would inherit $\Gamma_{11}'$. This would contradict Lemma~\ref{le_solounas}.
  \end{proof} 
    
\subsection{Conclusions about   gradings on $\sig{14}$.}

By summarizing the previous sections, we have proved our main result:

\begin{te}\label{th_main}
There are exactly 6 fine gradings on $\e6$ producing fine gradings on $\sig{14}$, namely, the
$\Z_2^3\times\Z_3^2$-grading $\Gamma_{3}$,  the $\Z_2^6$-grading $\Gamma_{7}$,  the $\Z\times\Z_2^4$-grading $\Gamma_{8}$,  the $\Z_2^7$-grading $\Gamma_{13}$,  the
$\Z\times\Z_2^5$-grading $\Gamma_{12}$  and the (outer) $\Z^2\times\Z_2^3$-grading $\Gamma_{10}$.

\end{te}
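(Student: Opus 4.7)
The plan is simply to collect the work done in all previous sections, since the theorem serves as a summary. First, I would recall the explicit constructions exhibiting the six positive cases: the $\Z_2^3\times\Z_3^2$-grading $\Gamma_{3}$ arises from Tits' construction $\T(\OO,\M)$ with the $\Z_2^3$-grading on $\OO$ from Eq.~\eqref{eq_gradenO} combined with the $\Z_3^2$-grading on $\M$ from Eq.~\eqref{eq_Z32}; the $\Z_2^6$-grading $\Gamma_{7}$ and the $\Z\times\Z_2^4$-grading $\Gamma_{8}$ come from the model $\L=(\der(\J)\oplus\J_0)^-$ with $\J=\mathcal{H}_3(\OO,\diag\{1,-1,1\})$ by lifting the $\Z_2^5$ and $\Z\times\Z_2^3$ fine gradings on $\J$; the $\Z\times\Z_2^5$-grading $\Gamma_{12}$ and the $\Z^2\times\Z_2^3$-grading $\Gamma_{10}$ arise from the flag model $L\subset S_V$ via restrictions of $\theta, F_1,\dots,F_4$ (resp.\ of $\theta,F_1,F_2$ together with the contact element $E=\ii(E_{56}-E_{65})$); and the $\Z_2^7$-grading $\Gamma_{13}$ is obtained, via Proposition~\ref{pr_nuestrometodo}, from $S^{\sigma_0\omega t}$ with $t=t_{-1,1,1,1,1,1}$, whose fixed-point subalgebra has dimension $46$, giving signature $78-2\cdot 46=-14$.

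Next I would rule out the remaining eight fine gradings on $\e6$. Five of them, namely $\Gamma_1$, $\Gamma_2$, $\Gamma_5$, $\Gamma_6$ and $\Gamma_9$, are excluded at once by Proposition~\ref{pr_lemano}: reading the last column of Table~\ref{tablafinasdee6} one checks that $-14$ does not lie in the admissible signature interval $\dim_\C S_e\pm\dim_\C\sum_{2g=e,\,g\ne e}S_g$ for any of these five gradings. The grading $\Gamma_{14}$ is ruled out by the representation-theoretic argument: any real form inheriting $\Gamma_{14}$ would force the even part $L_{\bar 0}^1$ to be a real form of $\mathfrak{sl}_4(\C)$ inheriting the $\Z_4^2$-grading of Eq.~\eqref{eq_Z42}, hence one of $\mathfrak{su}_{3,1}$ or $\mathfrak{su}_{2,2}$, but for both algebras $\overline{V(2\lambda_1)}\cong V(2\lambda_3)\not\cong V(2\lambda_1)$, contradicting that $S_{\bar 1}\cong 2V(2\lambda_1)$ must be self-conjugate.

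For $\Gamma_4$, the Tits' model $\T(\OO^\C,\Mat_{3\times 3}(\C)^+)$ yields a natural subalgebra $\tilde S$ of type $\d4\oplus\C^2$; if $\sig{14}$ inherited $\Gamma_4$, then Lemma~\ref{lema_kil} plus Proposition~\ref{pr_lemano} would force $-14\in 2+\{-28,-14,-4,2,4\}$, which fails. Finally, for $\Gamma_{11}$, Lemma~\ref{pr_hereda11} (via Lemma~\ref{lema_parejas}) shows that any $\Z_2$-coarsening of a hypothetical grading on $\sig{14}$ refining $\Gamma_{11}$ would have even part $\mathfrak{sp}_{2,2}(\mathbb H)$, whereas the case-by-case analysis of Lemma~\ref{le_solounas} shows that only $\mathfrak{sp}_8(\R)$ and $\mathfrak{sp}_{3,1}(\mathbb H)$ inherit the induced $\Z_2^3\times\Z_4$-grading on $\c4$. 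Assembling these constructions and exclusions completes the proof. The main obstacle in this synthesis is purely organizational; the real work has already been done in Sections~\ref{deAlbert}--\ref{todoZ2} for existence and in the preceding subsections of Section~7 for non-existence.
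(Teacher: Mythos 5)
Your proposal is correct and follows exactly the paper's route: the theorem is proved by assembling the six explicit constructions of Sections~3--6 with the exclusions of the remaining eight fine gradings (five via Proposition~\ref{pr_lemano} and Table~\ref{tablafinasdee6}, and $\Gamma_{14}$, $\Gamma_4$, $\Gamma_{11}$ via the ad-hoc arguments of Section~7), which is precisely how the paper concludes. All the cited signatures, fixed-point dimensions, and case analyses in your summary match the paper's.
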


 As happened in the study of $\sig{26}$, the following situations have still to be studied if we want to have a complete knowledge of the fine gradings on $\sig{14}$ up to equivalence:
\begin{itemize}
\item[a)]  There could exist a fine grading  on $\sig{14}$ whose complexification would not be a fine grading on $\e6$.

\item[b)] There could be two fine gradings on $\sig{14}$ not isomorphic but with isomorphic complexifications.
\end{itemize}
These questions have mainly  mathematical interest. For physical purposes, an interesting topic could be to study the different bases provided by the gradings and their properties: are they involved in  some physical phenomenon? Taking into account that every fine grading on a simple Lie algebra over a finite group provides a basis of semisimple elements, and that we have described a fine grading over $\Z_2^3\times\Z_3^2$, what type of properties or processes  can be described or modeled by an abelian  group with 3-torsion? 

 \medskip 
 
 \begin{center}\textbf{Acknoledgements}\end{center}
 
 The authors thank Professors A.~Elduque for his read of the manuscript and M.~Atencia for his English language revision.



\begin{thebibliography}{99}


\bibitem[ADrGu14]{Weyle6}
D. Aranda, C.~Draper and V.~Guido.
{\sl Weyl groups of the fine gradings on $\e6$}.
 J.Algebra  \textbf{417}(1) (2014), 353--390.

\bibitem[At15]{Atanasov}
A.~Atanasov.
{\sl Graded Lie Algebras, Supersymmetry, and Applications}.
Paper consulted in the web page http://abatanasov.com/

\bibitem[BKR18a]{reales2}
 Y.~Bahturin, M.~Kochetov, and A.~Rodrigo-Escudero.
{\sl Classification of involutions on
graded-division simple real algebras,}
 Linear Algebra Appl. \textbf{546} (2018), 1--36.

\bibitem[BKR18b]{reales3}
Y.~Bahturin, M.~Kochetov, and A.~Rodrigo-Escudero.
{\sl Gradings on classical central simple real Lie algebras,}
   J.~Algebra  \textbf{506} (2018), 1--42.

\bibitem[BFGM06]{agujerosnegros}
  S.~Bellucci, S.~Ferrara, M.~Gunaydin and A.~Marrani. 
{\sl  Charge orbits of symmetric special geometries and attractors}. 
 Internat. J. Modern Phys. A 21 (2006), no. 25, 5043--5097. 


\bibitem[CalDrM10]{reales}
A.J.~Calder\'{o}n, C.~Draper and C.~Mart\'{\i}n.
{\sl  Gradings on the real forms of $\frak{g}_2$ and $\frak{f}_4$}.
J.~Math. Phys.   \textbf{51}(5) (2010), 053516, 21 pp.

\bibitem[CS]{parabolic}
A.~\v{C}ap and J.~Slov\'{a}k.
Parabolic Geometries I, Background and General Theory.
Mathematical Surveys and Monographs, Vol 154. Amer. Math. Soc. 2009.

\bibitem[Ca27]{CartanAutomorfismos}
\'{E}.~Cartan.
{\sl La g\'eom\'etrie des groupes simples}.
Ann. di Mat. \textbf{4} (1927),   209--256.

\bibitem[Ch87]{Cheng}
J-H.~Cheng.
{\sl Graded Lie algebras of the Second kind.}
 Trans. Amer. Math. Soc. \textbf{302} (1987),  no.\,2, 467--488.

\bibitem[CoNSt]{fisicos}
L.~Corwin, Y.~Ne'eman  and S.~Sternberg.
{\sl   Graded Lie algebras in mathematics and physics (Bose-Fermi symmetry)}.
Rev. Mod. Phys. \textbf{47}  (1975), no.\,3, 573--603.

\bibitem[Cv]{libroinv}
 P.~Cvitanovi\'c. 
 Group theory. Birdtracks, Lie's, and exceptional groups. 
 Princeton University Press, Princeton, NJ, 2008. xiv+273 pp. 

\bibitem[Do09]{Dobrev}
V.K.~Dobrev.
\emph{Invariant Differential Operators for Non-Compact Lie Groups:  the  $E6(-14)$ case.}
 Proceedings, Eds. B. Dragovich, Z. Rakic, (Institute of Physics, Belgrade, SFIN Ser.
A: Conferences; A1 (2009)) pp. 95--124,
arXiv:0812.2655 [math-ph]

\bibitem[Do14]{Dobrev2}
V.K.~Dobrev.
\emph{Classification of Invariant Differential Operators for
Non-Compact Lie Algebras via Parabolic Relations.}
 J. Phys.: Conf. Ser. 512 012020

 \bibitem[DrE17]{nuestroe8}    
 C.~Draper and A.~Elduque.
 {\sl Maximal finite abelian subgroups of E8.}
  Proc. Roy. Soc. Edinburgh Sect. A \textbf{147} (2017), no.\,5, 993--1008. 

\bibitem[DrEl14]{proceedingsLecce}
C.~Draper and A.~Elduque.
 {\sl Fine gradings on the simple Lie algebras of type $E$}.
Note Mat. \textbf{34} (2014), no.\,1, 53--86.

 \bibitem[DrG16a]{Satakes}
C.~Draper and V.~Guido.
 \emph{On the real forms of the exceptional Lie algebra $\mathfrak e_6$ and their Satake diagrams}.
Non-associative and non-commutative algebra and operator theory, 211--226, 
Springer Proc. Math. Stat., \textbf{160}, Springer, Cham, 2016.

 \bibitem[DrG16b]{sig26}
 C.~Draper and V.~Guido.
 \emph{Gradings on the real form $\mathfrak{e}_{6,-26}$}. 
 J.~Math. Phys.   \textbf{57}(10) (2016), 18 pp. 

\bibitem[DrV16]{e6}
C.~Draper and A.~Viruel. 
\emph{Fine gradings on $\mathfrak{e}_6$}, 
Publ. Mat. \textbf{60}(1) (2016),   113--170.

\bibitem[DuF07]{fis7}
 M.J.~Duff and  S.~Ferrara. 
 {\sl $E_6$ and the bipartite entanglement of three qutrits.}
  Phys. Rev. D 76 (2007), no. 12, 124023, 7 pp.

\bibitem[EK]{libro}
A.~Elduque and M.~Kotchetov.
{\sl Gradings  on simple Lie algebras.}
Mathematical Surveys and Monographs 189, American
Mathematical Society, Providence, RI; Atlantic Association for Research
in the Mathematical Sciences (AARMS), Halifax, NS, 2013.

\bibitem[EK18]{reales1}
A.~Elduque and M.~Kotchetov.
{\sl  Gradings on the simple real Lie algebras of types G2 and D4}
Preprint arXiv:1803.10949.

\bibitem[GV05]{fis4}
T.~Gannon and M.~Vasudevan.
 JHEP 0507:035 (2005), hep-th/0504006.

 \bibitem[GPR18]{fibradosHiggs}
 O.~Garc\'\i a-Prada, A.~Pe\'on-Nieto and S.~Ramanan.
 {\sl Higgs bundles for real groups and the Hitchin-Kostant-Rallis section.}
  Trans. Amer. Math. Soc. \textbf{370} (2018), no.~4, 2907--2953.

\bibitem[Ge]{libroGeorgi}
H.~Georgi.
 {\sl  Lie algebras in particle physics. From isospin to unified theories.}
 Frontiers in Physics, 54.
 Benjamin/Cummings Publishing Co., Inc., Advanced Book Program, Reading, Mass., 1982. xxii+255 pp.

\bibitem[GLM07]{fis6}
S. Gurrieri, A. Lukas and A. Micu.
 JHEP 0712:081 (2007), arXiv:0709.1932.

 \bibitem[HPS08]{motivacion14}
M.~Henneaux, D.~Persson and P.~Spindel.
\emph{ Spacelike Singularities and Hidden Symmetries of Gravity},
Living Rev. Relativity \textbf{11} (2008), 1--232. 
http://www.livingreviews.org/lrr-2008-1

 \bibitem[HPPe00]{LGIII}  
 M.~Havl\'{\i}\v{c}ek, J.~Patera and E.~Pelantov\'{a}.
{\sl On Lie gradings III. Gradings of the real forms of classical
Lie algebras (dedicated to the memory of H. Zassenhaus)}. 
Linear Algebra   Appl. {\bf 314} (2000), 1--47.

\bibitem[H]{Draper:Humphreysalg}
J.E.~Humphreys. 
Introduction to Lie algebras and representation theory,
Graduate Texts in Mathematics \textbf{9},
Springer-Verlag, New-York, 1978.

\bibitem[HM06]{fis5}
P.Q.~Hung and P.~Mosconi.
 hep-ph/0611001.

 \bibitem[I97]{Iachello} 
 F.~Iachello.
{\sl Algebraic methods in physics.}
Symmetries in science, X (Bregenz, 1997), 135--143, Plenum, New York, 1998. 

 \bibitem[Ja]{Jacobsondeexcepcionales}
N.~Jacobson.
{\sl Exceptional Lie algebras.}
Lecture notes in pure and applied mathematics.
Marcel Dekker, Inc., New York, 1971.

\bibitem[KL05]{fis3}
J.~Kang and P.~Langacker.
 Phys. Rev. D71 (2005) 035014, hep-ph/0412190.

\bibitem[KGK10]{realquadrangle}
 T.~Kurth, R.~Gramlich and  L.~Kramer.
 {\sl The real quadrangle of type E6. }
 Adv. Geom. \textbf{10} (2010), no. 3, 505--526. 

\bibitem[MT16]{Marraniexcepcional}
A.~Marrani and P.~Truini. 
 {\sl Exceptional Lie algebras at the very foundations of space and time.}
  p-Adic Numbers Ultrametric Anal. Appl. \textbf{8} (2016), no. 1, 68--86.  

\bibitem[MPW91]{fis8}
 I.~Morrison, P.W.~Pieruschka and B.G.~Wybourne.
 {\sl  The interacting boson model with the exceptional groups G2 and E6.}
  J. Math. Phys. \textbf{32} (1991), no.\,2, 356--372.

\bibitem[O]{libroreales}
A.L.~Onishchnik.
{\sl Lectures on Real Semisimple Lie Algebras and Their Representations.}
European Mathematical Society, 2004.
Series ESI Lectures in Mathematics and Physics.

\bibitem[OV]{enci}
A.L.~Onishchnik, \`{E}.B.~Vinberg (Editors).
Lie Groups and  Lie Algebras III,  Encyclopaedia of Mathematical Sciences, Vol. 41.
Springer-Verlag, Berlin, 1991.

\bibitem[PZ89]{LGI} 
J.~Patera and H.~Zassenhaus.
 {\sl On Lie gradings. I}.
Linear Algebra  Appl.  \textbf{112} (1989), 87--159.

\bibitem[Sv08]{Svobodova}    
M.~Svobodov\'a.
{\sl Fine Gradings of Low-Rank Complex Lie Algebras and of Their Real Forms}.
SIGMA 4 (2008), 039, 13 pages.

\bibitem[T66]{Tits}
J.~Tits.
 {\sl Alg\`{e}bres alternatives, alg\`{e}bres de Jordan et alg\`{e}bres de Lie exceptionelles. I. Construction}. 
Nederl. Akab. Wetensch. 
Proc. Ser. A 69 = Indag. Math. \textbf{28} (1966), 223--237.

\bibitem[W95]{Wybourne}
B.G.~Wybourne. 
{\sl Exceptional Lie groups in physics.}
 Lithuanian Journal of Physics. Volume 35, 1995, no.\,2, 123--132.

\bibitem[Y16]{Yu}
J.~Yu. 
{\sl Maximal abelian subgroups of compact simple Lie groups of type E.}
 Geom. Dedicata  \textbf{185} (2016), 205--269. 


\end{thebibliography}
\end{document}